\pgfplotsset{compat=1.10}
\newtheorem{theorem}{Theorem}[section]
\newtheorem{lemma}[theorem]{Lemma}
\newtheorem{proposition}[theorem]{Proposition}
\newtheorem{remark}[theorem]{Remark}
\definecolor{cadmiumgreen}{rgb}{0.0, 0.42, 0.24}
\numberwithin{equation}{section}
\numberwithin{figure}{section}
\begin{document}

\title[ ]{On a  Free Boundary Model for Three-Dimensional MEMS with a Hinged Top Plate I: Stationary Case}


\author{Katerina Nik}
\address{Leibniz Universit\"at Hannover\\ Institut f\" ur Angewandte Mathematik \\ Welfengarten 1 \\ D--30167 Hannover\\ Germany}
\email{nik@ifam.uni-hannover.de}
%
%
\date{September 27, 2020}
\keywords{MEMS, free boundary problem, hinged plate equation, stationary solutions}
\subjclass[2010]{35R35, 35J58, 35B30, 35Q74, 74M05}


\begin{abstract}
A stationary free boundary problem modeling a three-dimensional electrostatic MEMS 
device is investigated. The device is made of a rigid ground plate and an elastic top plate 
which is hinged at its boundary, the plates being held at different voltages. The model couples a 
nonlocal fourth-order equation for the deformation of the top plate to a Laplace equation for the 
electrostatic potential in the free domain between the two plates. 
The strength of the coupling is tuned by a parameter $\lambda$ which is proportional 
to the square of the applied voltage difference.  
Existence of a stable stationary solution is established  
for small values of $\lambda$. Nonexistence of stationary solutions is obtained  
when $\lambda$ is large enough. 
\end{abstract}
%
\maketitle
%
\section{Introduction}
Microelectromechanical systems (MEMS) are microscopic devices that combine 
electrical and mechanical elements. They often act as sensors or actuators and are used 
in a wide range of nowadays electronics like accelerometers, micropumps, optical switches, 
and microgrippers. Simple idealized MEMS consist of a thin rigid conducting plate above which 
a thin conducting elastic plate is suspended. 
Application of  a voltage difference between the two plates generates a Coulomb force 
which induces a deformation of the elastic plate and thus transforms electrostatic energy
into mechanical energy.
A major factor limiting the effectiveness of such devices is the so-called \textit{pull-in instability} 
occurring when the voltage difference exceeds a certain threshold value. In this case, the Coulomb 
force can no longer be balanced by the response of the elastic plate which then collapses onto the 
rigid plate. To guarantee functionality of such devices it is important to know 
the precise value of this threshold. 

The mathematical description for this type of MEMS involves the deformation of the 
elastic plate and the electrostatic potential between the two plates. 
The governing equation for the elastic plate is derived from its energy balance and involves the 
gradient of the electrostatic potential on the elastic plate. 
The electrostatic potential, in turn, is harmonic in the region enclosed 
by the two plates with given values on both of them. Overall, this leads to a free boundary 
problem, see e.g. 
\cite{EscherLienstrombergsurvey, LaurencotWalker2017, PeleskoBernstein2002} 
and the references therein.
However, most existing literature focuses on two-dimensional MEMS where the elastic 
plate is clamped at its boundary, namely, both position and angle at the boundary are fixed, giving 
rise to Dirichlet boundary conditions for the plate deformation. 

This paper deals with a three-dimensional MEMS device in which the elastic plate is hinged at the 
boundary, meaning that only the position is fixed. This results in a fourth-order equation for the 
elastic plate deformation, but now supplemented with Steklov-type boundary conditions instead of 
Dirichlet ones. Moreover, the right-hand side of this equation, given by the square 
of the gradient of the electrostatic potential on the elastic plate, has much less 
regularity properties due to the fact that the electrostatic potential is now harmonic 
in the three-dimensional region between 
the elastic plate and the rigid plate. We will be more precise later on. 

The paper is organized as follows. The free boundary model is derived in Section \ref{deriv}. 
In Section  \ref{mainresults}, we state our main results regarding existence and nonexistence 
of stationary solutions in dependence on 
the applied voltage difference. We give the corresponding proofs in the subsequent
sections.

\section{Derivation of the Model}
\label{deriv}
In this section, we derive a model for the electrostatic MEMS device depicted in 
Figure \ref{geometry}. 
\begin{figure}
	\centering
	\includegraphics[width=0.76\textwidth]{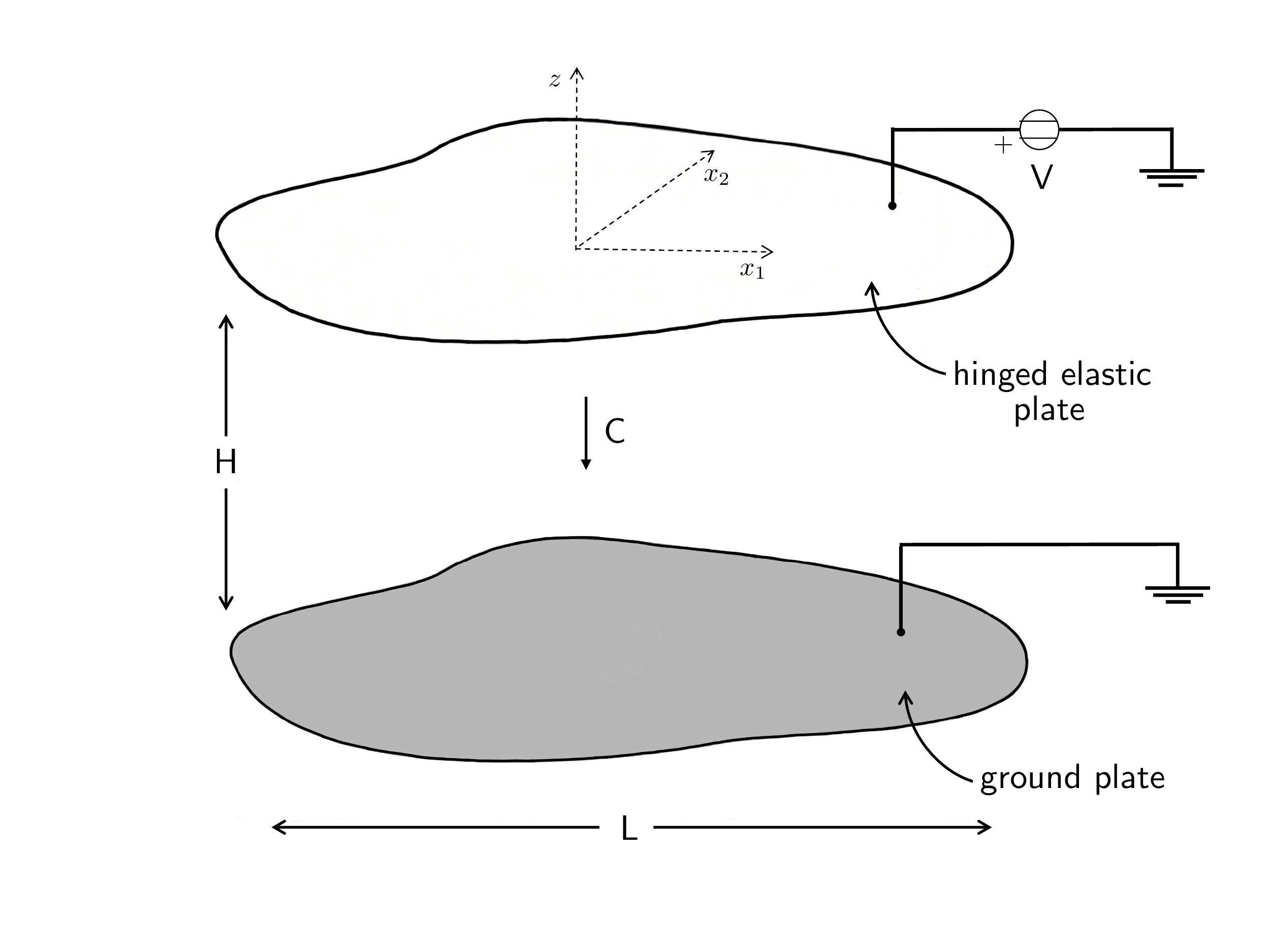}   
	\caption{Idealized electrostatic MEMS device.}
	\label{geometry}
\end{figure}
The device consists of a rigid ground plate of shape  $D\subset \mathbb{R}^2$ above which 
an elastic plate with the same shape $D$ at rest is suspended. 
The ground plate is located at height $z=-H$, while the elastic plate's rest position is at $z=0$. 
We assume that both plates are perfect conductors and are separated by a dielectric medium with 
relative permittivity equal to one. Holding the ground plate at potential zero and applying a 
potential $V>0$ to the elastic plate induce a Coulomb force across the device and thereby a 
deflection of the elastic plate from its rest position, which is assumed to be purely vertical. 
We let $u=u(x)>-H$ denote the vertical deflection 
of the elastic plate from $z=0$ at the point $x=(x_1,x_2)\in D$, and we let $\psi_u=\psi_u(x,z)$ 
denote the electrostatic potential at the point $(x,z)  \in \Omega(u)$, where
\begin{equation*}
\Omega(u):= \{ (x,z) \in D \times \mathbb{R}\, : \, -H<z<u(x)\}
\end{equation*}
is the three-dimensional region between the two plates. We further assume that the elastic plate is 
hinged, that is, we fix the vertical position at the boundary. This gives 
\begin{equation*}
u(x)= 0, \quad x \in \partial D.
\end{equation*}
As we will see later on, the hinged elastic plate also satisfies a natural boundary condition. 
With these assumptions in mind, we formulate the equations governing the electrostatic potential 
in the device and the vertical deflection of the elastic plate. 

\subsection{Governing Equations for $\psi_u$}
\label{gepsi}

The electrostatic potential $\psi_u$ is a solution to the Laplace equation, 
\begin{equation}
\label{modelpsi.1}
\Delta \psi_u =0 \quad \text{in } \: \Omega(u),
\end{equation}
with the boundary conditions 
\begin{equation*}
\psi_u(x,-H) = 0, \quad \psi_u(x,u(x)) =V, \quad x\in D. 
\end{equation*}
We assume that the continuous extension of $\psi_u$ to the vertical sides of $\Omega(u)$ is an 
affine function of $z$, so that
\begin{equation}
\label{modelpsi.2}
\psi_u(x,z)=\frac{V(H+z)}{H+u(x)}\,, \quad (x,z) \in \partial \Omega(u).
\end{equation}
The situation for \eqref{modelpsi.1} and \eqref{modelpsi.2} is depicted in Figure \ref{cross}. 

 \begin{figure}
 	\begin{raggedleft}
 	\begin{tikzpicture}[scale=0.73]
 	\draw[black, line width = 2pt] (-7,0)--(-7,-5);
 	\draw[black, line width = 2pt] (7,-5)--(7,0);
 	\draw[black, line width = 2pt] (-7,-5)--(7,-5);
 	\draw[blue, line width = 2pt] plot[domain=-7:7] (\x,{-1.3-1.7*cos((0.78*pi*\x/7) r)});
 	\draw[black, line width = 1pt, arrows=->] (3,0)--(3,-2.05);
 	\node at (3.65,-0.7) {${\color{black} u(x)}$};
    \node[draw,rectangle,white,fill=white, rounded corners=5pt] at (2,-4.5) {$\Omega_1$};
 	\node at (4.5,-3.5) {${\color{black} \Omega(u)}$};
 	\node at (4,-5.75) {${\color{black} D}$};
	\draw (3.55,-5.75) edge[->,bend left, line width = 1pt] (2.3,-5.1);
	
	\node at (-8.6,0) {$z=0$};
	\node at (-8.4,-5) {$z=-H$};
	\node at (-3.5,-4.6) {$\psi_u=0$};
	\node at (-3.5,-1) {$\psi_u=V$};

    \node at (-0.3,1.7) {$z$};
 	\draw[black, line width = 1pt, arrows = ->,dashed] (0,-5.7)--(0,1.7);
 	\node at (8.6,0.3) {$x_1$};
 	\draw[black, line width = 1pt, arrows = ->,dashed] (-7.7,0)--(8.7,0);
 	\node at (1.53,0.53) {$x_2$};
 	\draw[black, line width = 1pt, arrows = ->,dashed] (0,0)--(1.7,1);
 	
 	\end{tikzpicture}
 	\caption{Cross section of the idealized electrostatic MEMS device.}\label{cross}
 \end{raggedleft}
 \end{figure}
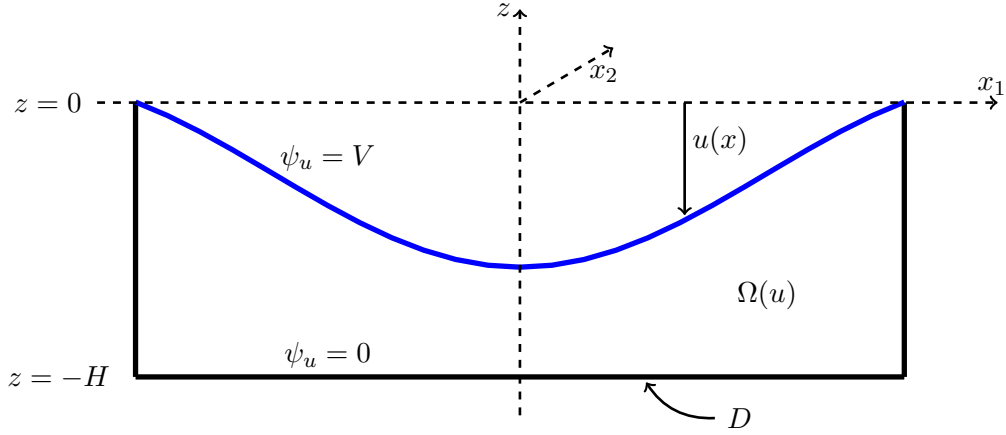

\subsection{Governing Equations for $u$}
\label{geu}

The total energy of the device is constituted of the electrostatic energy 
and the mechanical energy 
and reads 
\begin{equation*}
E(u)= E_e(u)+ E_m(u).
\end{equation*}
The electrostatic energy is given by, see \cite{LaurencotWalkerSIAM2018}, 
\begin{equation*}
\label{electrostatice}
E_e(u) := - \frac{\varepsilon_0}{2} \int_{\Omega(u)} 
\vert \nabla \psi_u \vert^2 \, d(x,z)
\end{equation*}
with $\varepsilon_0$ being the vacuum permittivity. Notice that $E_e(u)$ is nonpositive and that 
$E_e(u)$ depends on $u$ not only through the integral over $\Omega(u)$ but also 
through the solution  $\psi_u$ to \eqref{modelpsi.1}-\eqref{modelpsi.2}. 
The mechanical energy is given by, see \cite{SweersVassi2018}, 
\begin{equation*}
\label{mechanicale}
E_m(u):=B \int_D \Big( \tfrac{1}{2}  (\Delta u)^2 + (1- \sigma) 
\big[(\partial_{x_2}\partial_{x_1}u)^2 - \partial_{x_1}^2 u \,
\partial_{x_2}^2 u\big] \Big)\, dx + \tfrac{T}{2} \int_D  \vert \nabla u \vert^2 \, dx,
\end{equation*}
where the first term accounts for plate bending and torsion with flexural rigidity $B>0$ and 
Poisson ratio $\sigma \in (-1,1)$, and the second term accounts for stretching with stress coefficient 
$T \geq 0$. The second integral in $E_m(u)$ is clearly nonnegative. For the second integral, we 
obtain by applying Young's inequality 
\begin{align*}
&  \int_D \left( \tfrac{1}{2} (\Delta u)^2 + (1- \sigma) 
\big[ (\partial_{x_2}\partial_{x_1}u)^2 - \partial_{x_1}^2 u \, 
\partial_{x_2}^2 u\big] \right) dx 
\\
&\quad =  \int_D \left(\tfrac{1}{2}\, (\partial_{x_1}^2 u)^2 + \tfrac{1}{2}\, (\partial_{x_2}^2 u)^2 
+ (1- \sigma) (\partial_{x_2}\partial_{x_1}u)^2 + \sigma \, \partial_{x_1}^2 u \, 
\partial_{x_2}^2 u \right) dx
\\
& \quad \geq   \int_D \left( \tfrac{1}{2}\, (\partial_{x_1}^2 u)^2 + \tfrac{1}{2}\, (\partial_{x_2}^2 u)^2 
+ (1- \sigma) (\partial_{x_2}\partial_{x_1}u)^2 - \tfrac{\vert \sigma \vert}{2} \,
\big[ (\partial_{x_1}^2 u)^2 + (\partial_{x_2}^2 u)^2  \big]\right) dx
\\
& \quad \geq   \int_D \left( \tfrac{(1- \vert \sigma \vert)}{2} \,
\big[ (\partial_{x_1}^2 u)^2 + (\partial_{x_2}^2 u)^2  \big] 
+ ( 1- \vert \sigma \vert) (\partial_{x_2}\partial_{x_1}u)^2\right) dx
\\
& \quad =  \tfrac{ (1- \vert \sigma\vert)}{2} 
\int_D \left((\partial_{x_1}^2 u)^2 + (\partial_{x_2}^2 u)^2 
+ 2(\partial_{x_2}\partial_{x_1}u)^2 \right)  dx \geq 0.
\end{align*}
So we can make the following remark. 

\begin{remark}
The total energy is the sum of two terms with different signs.
\end{remark}

We now derive the Euler-Lagrange equation and the accompanying natural boundary condition 
by applying a variational principle to the total energy $E(u)$, that is, by finding its critical points. 
We therefore need to compute the first variation of $E(u)$ and to find $u$ such that 
\begin{equation*}
\delta E (u;v) := \frac{d}{dr} E(u+rv)\vert_
{r=0} =0 \quad \text{ for all }  v.
\end{equation*}
Since we suppose the elastic plate to be hinged, an appropriate space in order to look for 
critical points is $\{ v  \in C^{\infty}(\overline{D}) \, : \, v=0 \text{ on } \partial D \}$. 
In the following, we assume that $D$ is a bounded domain in 
$\mathbb{R}^2$ with a sufficiently smooth boundary, such that the exterior unit normal $\nu$ 
and the curvature $\kappa$ of $\partial D$ are well-defined and continuous.

\subsubsection*{Formal derivation of the first variation of the electrostatic energy} 
Here, we follow the same approach as the one discussed in \cite{LaurencotWalkerSIAM2018}.
Let us fix a smooth deformation $u: \overline{D}  \rightarrow \mathbb{R}$ such that 
$u=0$ on $\partial D$ and $u>-H$ in $D$, and let $\psi_u$ be the corresponding solution 
to \eqref{modelpsi.1}-\eqref{modelpsi.2}. Recall that $\psi_u$ 
depends nonlocally on $u$ according to  \eqref{modelpsi.1}. Let $v \in C^{\infty}(\overline{D})$ 
with $v =0$ on $\partial D$ be arbitrary and 
set $u_r:= u + rv$ for $-r_0 <r< r_0$, where $r_0>0$ is chosen sufficiently small so that 
\begin{equation*}
u_r > -H \quad \text{ in } D \; \text{ for all } r \in (-r_0,r_0).
\end{equation*}
Since $u$ is fixed, we write $\psi$ and $\Omega$ rather than $\psi_u$ 
and $\Omega(u)$ in the sequel. In order to compute $\delta \mathcal{E}_e(u;v)$, 
we introduce, for $ r \in (-r_0,r_0)$, the 
transformation $\Phi(r)$ by 
\begin{equation}
\label{trafoP}
\Phi(r)(x,z):= \left(x, z+ rv(x) \frac{H+z}{H+u(x)}\right), \quad (x,z)\in \Omega,
\end{equation}
and notice that 
\begin{equation*}
\Omega(u_r)= \Phi(r)(\Omega) \quad \text{ and } \quad 
\det(\nabla \Phi(r))= 1+ \frac{rv}{H+u} > 0. 
\end{equation*}
Next, let $\psi(r)$ be the solution to \eqref{modelpsi.1}-\eqref{modelpsi.2} in $\Omega(u_r)$, 
that is, 
\begin{empheq}{align}
\Delta \psi(r) &= 0  &&  \hspace{-1.5cm} \text{ in } \Omega(u_r), 
\label{modele2} \\
\psi(r)(x,z) &=V \tfrac{H+z}{H+u_r(x)}, && \hspace{-1.5cm} (x,z) \in \partial \Omega(u_r). 
\label{modele3}
\end{empheq}
Then $\psi(0)= \psi$. Let us now compute the derivative of 
\begin{equation*}
E_e(u_r) = -\frac{\varepsilon_0}{2} \int_{\Omega(u_r)} \vert \nabla \psi(r)
\vert^2 \, d(x,z)
\end{equation*}
with respect to $r$. By the Reynolds transport theorem, see 
\cite[Theorem 5.2.2]{HenrotPierre2018} or 
\cite[XII.Theorem 2.11]{AmannEscher2009}, we deduce that 
\begin{equation*}
\frac{d}{dr} E_e(u_r)\vert_{r=0} = - \varepsilon_0 \int_{\Omega} 
\left( \nabla \psi \cdot \nabla \partial_r \psi(0) + \text{div} \Big(\tfrac{\vert 
\nabla \psi \vert^2}{2} \, \partial_r \Phi(0) \Big)\right) d(x,z).
\end{equation*}
From Gauss' theorem it follows that
\begin{equation*}
\frac{d}{dr} E_e(u_r)\vert_{r=0} = \varepsilon_0 \int_{\Omega} \Delta \psi \, \partial_r \psi(0) \, d(x,z) 
- \varepsilon_0 \int_{\partial \Omega}\Big(\partial_r \psi(0)  \nabla \psi + \tfrac{\vert \nabla 
	\psi \vert^2}{2} \, 
\partial_r \Phi(0) \Big) \cdot n_{\partial \Omega} \, dS,
\end{equation*}
where $n_{\partial \Omega}$ is the outward unit normal of $\partial \Omega$. 
Using that $\Delta \psi = 0$ in $\Omega$ we get 
\begin{equation}
\label{bypartse1}
\frac{d}{dr} E_e(u_r)\vert_{r=0} = -
\varepsilon_0 \int_{\partial \Omega}\Big(\partial_r \psi(0)  \nabla \psi 
+ \tfrac{\vert \nabla \psi \vert^2}{2} \, 
\partial_r \Phi(0) \Big) \cdot n_{\partial \Omega} \, dS.
\end{equation}
By \eqref{trafoP}, we have 
\begin{equation}
\label{trafoP1}
\partial_r \Phi(0)(x,z)= \left(0, v(x)\, \dfrac{H+z}{H+u(x)}\right), \quad (x,z)\in \Omega,
\end{equation}
and thus $\partial_r \Phi(0)=(0,0)$ on $ D \times \{-H\}$. It also holds that 
$\partial_r \psi(0) =0$ on $ D \times \{-H\}$. Since $ v\vert_{\partial D}=0$, $\partial_r \Phi(0)$ and 
$\partial_r \psi(0)$ vanish on $\partial D \times (-H,0)$. Then, \eqref{bypartse1} reduces to 
\begin{equation*}
\frac{d}{dr} E_e(u_r)\vert_{r=0} = -
\varepsilon_0 \int_{\mathfrak{G}}\Big(\partial_r \psi(0)  \nabla \psi 
+ \tfrac{\vert \nabla \psi \vert^2}{2} \, 
\partial_r \Phi(0) \Big) \cdot n_{\mathfrak{G}} \, dS,
\end{equation*}
with
\begin{equation*}
n_{\mathfrak{G}} = \frac{(-\nabla u(x), 1)}{\sqrt{1+ \vert \nabla u(x) \vert^2}}\, , \quad x \in D, 
\end{equation*}
denoting the outward unit normal on the upper boundary $\mathfrak{G}:= \{(x,u(x))\, : \, x \in D \}$ of 
$\Omega$. Hence, 
\begin{align}
\dfrac{d}{dr} E_e(u_r)\vert_{r=0} &= - 
\varepsilon_0 \int_{D}\Big(\partial_r \psi(0)(x,u(x)) \nabla \psi(x,u(x)) 
\nonumber\\[-0.17cm] 
& \hspace{2.8cm} + \frac{\vert \nabla \psi(x,u(x))\vert^2}{2} \, 
\partial_r \Phi(0)(x,u(x)) \Big) \cdot(-\nabla u(x), 1) \, dx.
\label{bypartse2}
\end{align}
If we put $ \nabla':=(\partial_{x_1}, \partial_{x_2})$ and differentiate the boundary 
condition $\psi(x,u(x))=V$ with respect to $x$, we obtain 
\begin{equation}
\label{boundarye1}
\nabla'\psi(x,u(x)) = - \nabla u(x) \partial_z \psi(x,u(x)), \quad x\in D.
\end{equation}
Now recalling from \eqref{modele3} that 
\begin{equation*}
\psi(r)(x,u_r(x)) = V, \quad x\in D,\; r\in (-r_0,r_0),
\end{equation*}
it follows that
\begin{equation*}
\partial_r \psi(0)(x,u(x)) = - \partial_z \psi(x,u(x)) \, v(x), \quad x\in D.
\end{equation*}
This, together with $\partial_r \Phi(0)(x,u(x)) = (0, v(x))$, $x \in D$, and \eqref{boundarye1}, 
then yields that
\begin{equation*}
\frac{d}{dr} E_e(u_r)\vert_{r=0} = \frac{
	\varepsilon_0}{2} \int_{D} \left(1+ \vert \nabla u(x) \vert^2 \right) 
(\partial_z \psi(x,u(x)))^2 \, v(x)\, dx.
\end{equation*}
By \eqref{boundarye1} again, 
\begin{equation*}
\left(1+ \vert \nabla u(x) \vert^2 \right) (\partial_z \psi(x,u(x)))^2 = 
\vert \nabla \psi(x,u(x)) \vert^2, \quad x \in D, 
\end{equation*}
and hence 
\begin{equation}
\label{fvariationelectro3}
\frac{d}{dr} E_e(u_r)\vert_{r=0} = \frac{
	\varepsilon_0}{2} \int_{D} \vert \nabla \psi(x,u(x)) \vert^2 \, v(x)\, dx.
\end{equation}

\begin{remark}
	We note that the above calculations are formal since we did not specify the regularity of 
	$\psi(r)$, neither with respect to $r$ nor with respect to $(x,z) \in \Omega(u_r)$.
\end{remark}

\subsubsection*{Derivation of the first variation of the mechanical energy} 
Fix $ u \in C^{\infty}(\overline{D})$ with $ u =0$ on $\partial D$ and $u>-H$ in $D$. 
Let $v$ and $u_r$ be as above. 
We calculate $\delta E_m(u;v)$ for an arbitrary $v$ as follows. 
First, we have 
\begin{align}
\label{fvariationmechanic}
&\dfrac{d}{dr} E_m(u_r)\vert_
{r=0}
\nonumber \\[-0.11cm] 
&\hspace{0.5cm} = B \int_D \left( \Delta u \, \Delta v + (1-\sigma) \big[ 2 \, 
\partial_{x_2}\partial_{x_1} u 
\, \partial_{x_2}\partial_{x_1} v - \partial_{x_1}^2 u \, \partial_{x_2}^2 v - 
\partial_{x_2}^2 u\,  \partial_{x_1}^2 v\big]\right)  dx
 \\[-0.09cm] 
&\hspace{1cm} + T \int_{D} \nabla u \cdot \nabla v\, dx.
\nonumber 
\end{align}
Performing Green's formula twice and using $ v\vert_{\partial D}=0$, we get for the 
first term in the right-hand side of \eqref{fvariationmechanic}, 
\begin{align}
\label{bypartsm2}
\int_D \Delta u \, \Delta v \, dx 
&= \int_{\partial D}\Delta u \, \partial_{\nu}v \, d\omega   
- \int_D \nabla \Delta u \cdot \nabla v \, dx
\nonumber \\[-0.05cm] 
&= \int_{\partial D}\Delta u \, \partial_{\nu}v \, d\omega 
+ \int_D v \, \Delta^2 u \, dx.
\end{align}
For the second term in the right-hand side, we use Green's 
formula twice and the fact that $u$ vanishes on $\partial D$. We obtain 
\begin{equation*}
2 \int_D  \partial_{x_2}\partial_{x_1} u \, \partial_{x_2}\partial_{x_1} v \, dx 
 = \int_{\partial D } \partial_{x_2}\partial_{x_1} v \left( \nu_2\, \partial_{x_1} u + \nu_1\,
\partial_{x_2} u\right) d\omega + 2 \int_D (\partial_{x_2}^2\partial_{x_1}^2 v) \,  u\, dx  
\end{equation*}
and 
\begin{equation*}
- \int_{D} \partial_{x_1}^2 u \, \partial_{x_2}^2 v \, dx = 
- \int_{\partial D} \nu_1 \, \partial_{x_2}^2 v \, \partial_{x_1} u \, d\omega 
- \int_D (\partial_{x_2}^2\partial_{x_1}^2 v) \,  u\, dx ,
\end{equation*}
\begin{equation*}
- \int_{D} \partial_{x_2}^2 u \, \partial_{x_1}^2 v \, dx = 
- \int_{\partial D} \nu_2 \, \partial_{x_1}^2 v \, \partial_{x_2} u \,  d\omega 
- \int_D (\partial_{x_2}^2\partial_{x_1}^2 v) \,  u\, dx,
\end{equation*} 
with $\nu =(\nu_1,\nu_2)$ denoting the exterior unit normal of $\partial D$. 
Hence
\begin{align*}
&\int_D \left( 2 \, \partial_{x_2}\partial_{x_1} u 
\, \partial_{x_2}\partial_{x_1} v - \partial_{x_1}^2 u \, \partial_{x_2}^2 v - 
\partial_{x_2}^2 u \,\partial_{x_1}^2 v\right) dx
\\[-0.05cm] 
&\quad = \int_{\partial D}\left(\nu_1 \, \partial_{x_2}\partial_{x_1} v \, \partial_{x_2} u
+ \nu_2 \,\partial_{x_2}\partial_{x_1} v \, \partial_{x_1} u 
- \nu_2 \, \partial_{x_1}^2 v \, \partial_{x_2} u  - \nu_1\, \partial_{x_2}^2 v \, \partial_{x_1} u 
\right) d\omega. 
\end{align*}
With the counterclockwise oriented tangent vector $s=(s_1,s_2)$ on $\partial D$ and the fact that
\begin{equation*}
\partial_{x_1} u\vert_{\partial D} = \nu_1 \partial_{\nu} u + s_1 \partial_s u, \quad 
\partial_{x_2} u\vert_{\partial D} = \nu_2 \partial_{\nu} u + s_2 \partial_s u,
\end{equation*} 
and $\partial_s u \vert_{\partial D} =0$ since $u \vert_{\partial D}=0$, we find that
\begin{align*}
&\int_{\partial D}\left(\nu_1\, \partial_{x_2}\partial_{x_1} v \, \partial_{x_2} u 
+ \nu_2 \,\partial_{x_2}\partial_{x_1} v \, \partial_{x_1} u 
- \nu_2 \, \partial_{x_1}^2 v \, \partial_{x_2} u - \nu_1 \partial_{x_2}^2 v \, \partial_{x_1} u \,
\right) d\omega
\\[-0.05cm] 
& \quad = \int_{\partial D}\left( 2 \nu_1 \nu_2 \, \partial_{x_2}\partial_{x_1} v 
- \nu_2^2 \, \partial_{x_1}^2 v - \nu_1^2 \,\partial_{x_2}^2 v \right) \partial_{\nu} u\, d\omega 
\\[-0.05cm] 
& \quad = \int_{\partial D}\left( 2 \nu_1 \nu_2 \, \partial_{x_2}\partial_{x_1} v 
+ \nu_1^2 \, \partial_{x_1}^2 v + \nu_2^2 \,\partial_{x_2}^2 v - \Delta v\right)\partial_{\nu} u\, d\omega
=  \int_{\partial D}\left( \partial_{\nu}^2 v - \Delta v 
\right)\partial_{\nu} u\, d\omega ,
\end{align*}
where in the last step we used 
\begin{equation*}
\partial_{\nu}^2 v = \nu_1^2 \, \partial_{x_1}^2 v 
+ \nu_2^2 \,\partial_{x_2}^2 v + 2 \nu_1 \, \nu_2 \, \partial_{x_2}\partial_{x_1} v. 
\end{equation*}
Using the relation $\Delta v = \partial_{\nu}^2 v + \partial_{s}^2 v + \kappa \, \partial_{\nu} v$, 
see  \cite[§ 4.1]{Sperb1981}, and $\partial_s v = \partial_{s}^2 v =0$ on $\partial D$ since 
$ v\vert_{\partial D}=0$, we get 
\begin{equation*}
\int_{\partial D}\left( \partial_{\nu}^2 v - \Delta v 
\right)\partial_{\nu} u\, d\omega = - \int_{\partial D} \kappa \, \partial_{\nu} v \, 
\partial_{\nu} u\, d\omega.
\end{equation*}
The function $\kappa$ is the signed curvature of $\partial D$.
Consequently, 
\begin{equation}
\label{bypartsm1}
\int_D \left( 2 \, \partial_{x_2}\partial_{x_1} u 
\, \partial_{x_2}\partial_{x_1} v - \partial_{x_1}^2 u \, \partial_{x_2}^2 v - 
\partial_{x_2}^2 u \,\partial_{x_1}^2 v\right) dx
=
- \int_{\partial D} \kappa \, \partial_{\nu} v \, 
\partial_{\nu} u\, d\omega.
\end{equation}
For the last term in the right-hand side of \eqref{fvariationmechanic} we have by Green's formula, 
\begin{equation*}
T \int_D \nabla u \cdot \nabla v \, dx = - T \int_D v \, \Delta u \, dx.
\end{equation*}
From this, together with \eqref{bypartsm2} and \eqref{bypartsm1}, we finally obtain  
\begin{equation}
\label{fvariationmechanic1}
\frac{d}{dr} E_m(u_r)\vert_
{r=0}
= \int_D \left( B \Delta^2 u -T \Delta u \right) v\, dx 
+ B \int_{\partial D} \left( \Delta u - (1-\sigma) \kappa\, \partial_{\nu} u \right) 
\partial_{\nu} v \, d\omega.
\end{equation}

\begin{remark}
	The identity \eqref{bypartsm1} can be proved under weaker 
	regularity assumptions on $u$ and $v$, see \cite{SweersVassi2018}. 
\end{remark}

\subsubsection*{The Euler-Lagrange equation and boundary conditions}

Gathering \eqref{fvariationelectro3} and \eqref{fvariationmechanic1} gives
\begin{align}
\label{fvariationtotal1}
\delta E(u;v) &= \delta\left(E_e(u;v)+ E_m(u;v)\right) \nonumber
\\[-0.05cm] 
&= \int_D \left( B\Delta^2 u -T\Delta u + \tfrac{\varepsilon_0}{2}\,
\vert \nabla \psi_u (x,u(x)) \vert^2 \right)v\, dx 
\nonumber \\[-0.05cm] 
& \quad +
B \int_{\partial D} \left( \Delta u - (1-\sigma) \kappa\, \partial_{\nu} u \right) 
\partial_{\nu} v \, d\omega, 
\end{align}
where $v$ was an arbitrary function in $C^{\infty}(\overline{D})$ such that $v=0$ on $\partial D$.
Setting the first variation equal to zero yields
\begin{equation*}
0= \int_D \left( B\Delta^2 u -T\Delta u + \tfrac{\varepsilon_0}{2}\,
\vert \nabla \psi_u (x,u(x)) \vert^2 \right)v\, dx +
B \int_{\partial D} \left( \Delta u - (1-\sigma) \kappa\, \partial_{\nu} u \right) 
\partial_{\nu} v \, d\omega
\end{equation*}
for all $v \in C^{\infty}(\overline{D})$ with $v=0$ on $\partial D$, and by the 
Fundamental Lemma of Calculus of 
Variations, first in $D$ and then on $\partial D$, it follows that 
\begin{empheq}{align*}
 B \Delta^{2}u -T \Delta u  &= - \tfrac{\varepsilon_0}{2}\, \vert \nabla \psi_u (x,u(x)) \vert^2
\hspace{-2.5cm}  && \text{ in }   D , 
\nonumber   \\[-0.05cm] 
  u&= 0   \hspace{-2.5cm} &&\text{ on } \partial D,
 \\[-0.05cm] 
B (\Delta u -(1 - \sigma) \kappa \partial_{\nu}u ) &= 0 
  \hspace{-2.5cm} && \text{ on } \partial D. 
\nonumber
\end{empheq}
Let us point out that the first boundary condition is an a 
priori boundary condition, whereas the second boundary condition arises as a 
natural boundary condition for $E(u)$. 

\subsection{Governing Equations for $(u,\psi_u)$}
\label{geupsi}
Combining the equations for the electrostatic potential $\psi_u$ and the 
deflection $u$, we arrive at the following equations for an equilibrium state
\begin{empheq}{align}
\Delta \psi_{u} &=0  &&  \hspace{-1.5cm} \text{ in } \Omega(u),
\label{staticpsi.1} \\
\psi_{u}(x,z) &= \tfrac{V(H + z)}{H + u(x)}\, , && \hspace{-1.5cm}  (x,z) \in \partial\Omega(u),
\label{staticpsi.2} \\
B \Delta^{2}u -T \Delta u  &= - \tfrac{\varepsilon_0}{2}\, \vert \nabla \psi_u (x,u(x)) \vert^2 
&&  \hspace{-1.5cm} \text{ in }  D , 
\label{staticu.1}  \\
u= B(\Delta u -(1 - \sigma) \kappa \partial_{\nu}u ) &= 0  
&& \hspace{-1.5cm} \text{ on }  \partial D. 
\label{staticu.2} 
\end{empheq}
Equation \eqref{staticu.1} reflects the balance of mechanical and electrostatic force on 
$\mathfrak{G}_u=\{ (x,u(x)) \\:  x \in D\}$. 
Observe that \eqref{staticpsi.1}-\eqref{staticu.2} is a free boundary problem, since the domain 
$\Omega(u)$ and its boundary component $\mathfrak{G}_u$ 
have to be determined together with the solution $(u,\psi_u)$. 
Furthermore, observe that equation \eqref{staticu.1} 
is a nonlocal fourth-order equation for the deflection $u$, which is coupled to the second-order 
equation in the three-dimensional free domain $\Omega(u)$ for the potential $\psi_u$. 
The Steklov-type boundary conditions \eqref{staticu.2} mean that the elastic plate is hinged. 

We now introduce dimensionless variables in equations \eqref{staticpsi.1}-\eqref{staticpsi.2} for 
$\psi_{u}$ and \eqref{staticu.1}-\eqref{staticu.2} for $u$. We scale $\psi_{u}$ with the applied potential 
$V$, the variable $x$ with a characteristic length $L$ of the device, and both $z$ and $u$ with the size 
of the  gap $H$ between the ground plate and the undeflected elastic plate. So we define 
\begin{equation*}
\tilde{\psi}_{\tilde{u}}:=\frac{\psi_{u}}{V}\,, \quad \tilde{x}:=\frac{x}{L}\,, \quad 
\tilde{z}:=\frac{z}{H}\,, \quad \tilde{u}:=\frac{u}{H}\,, 
\end{equation*}
and the aspect ratio $\varepsilon:=H/L >0$ of the device. Accordingly, we  introduce the sets 
\begin{equation*}
\tilde{D}:=\{\tilde{x}\in \mathbb{R}^2\, : \, L\tilde{x}\in D\}, 
\quad 
\tilde{\Omega}(\tilde{u}):=\{(\tilde{x}, \tilde{z})\in \tilde{D}
\times \mathbb{R}\, : \, -1< \tilde{z} < \tilde{u}(\tilde{x})\} ,
\end{equation*}
and define the parameters 
\begin{equation*}
\beta:= B>0, \quad 
\tau:=PL^2\geq 0,\quad  
\lambda = \lambda(\varepsilon) := \frac{\varepsilon_0 V^2L}{2\varepsilon^3}>0. 
\end{equation*}
We then substitute these relations into \eqref{staticpsi.1}-\eqref{staticu.2} to 
derive dimensionless equations. Dropping the tilde symbol everywhere, we get for the dimensionless 
electrostatic potential 
\begin{empheq}{align}
\varepsilon^2 \Delta' \psi_u + \partial_z^2 \psi_u &=0  &&  \hspace{-2.5cm} \text{ in } \Omega(u),
\label{restatic1} \\[-0.05cm] 
\psi_u(x,z) &= \frac{1+z}{1+u(x)}, && \hspace{-2.5cm} (x,z) \in \partial  \Omega(u),
\label{restatic2}
\end{empheq}
where $\Delta' := \partial_{x_1}^2 +  \partial_{x_1}^2$. Also, we obtain for the dimensionless deflection 
of the elastic plate the equation 
\begin{equation}
\beta \Delta^2 u - \tau \Delta u = - \lambda  \left( \varepsilon^2 \vert \nabla' \psi_u (x,u(x)) \vert^2 + 
(\partial_z \psi_u (x,u(x)))^2 \right), \quad x \in D,
\label{restatic3}
\end{equation}
with $\nabla':=(\partial_{x_1}, \partial_{x_2})$ and subject to the hinged boundary conditions 
\begin{equation}
u=\Delta u - (1 - \sigma) \kappa \partial_{\nu}u  = 0,  \quad x \in  \partial D.
\label{restatic4}
\end{equation}
Equations \eqref{restatic1}-\eqref{restatic4} are the system describing the statics 
of an idealized MEMS with a hinged top plate that we will consider throughout the paper. 
Let us point out that this system 
is only meaningful as long as the elastic plate 
does not touch down on the rigid plate, 
that is, the deflection $u$ satisfies $u>-1$. Indeed, if $u$ reaches the value $-1$ 
somewhere in $D$, the region  $\Omega (u)$ gets disconnected. 
Moreover, the vertical derivative $\partial_z \psi_{u}$ appearing in equation 
\eqref{restatic3} becomes singular at such touchdown points as $\psi_{u}=1$ 
along $z=u(x)$ while $\psi_{u}=0$ along $z=-1$. The singularity is some sense tuned by the 
parameter $\lambda$ which is proportional to the square of the applied voltage and it 
is thus expected that solutions to \eqref{restatic1}-\eqref{restatic4} only exist for small values of 
$\lambda$ below a certain critical threshold. 
Questions related to the pull-in threshold have been a field of very active research in recent years. 
However, most of the research has been focused on a simplified version 
of \eqref{restatic1}-\eqref{restatic4}, which we will shortly introduce here. 

\subsection{Small Gap Model}
\label{sgm}
A common assumption made in most prior works for MEMS is a vanishing aspect ratio 
$\varepsilon = H/L$ that reduces the free boundary problem to a single equation with a right-hand 
side developing a singularity in the moment the elastic plate touches down on the ground plate. 
More precisely, setting formally $\varepsilon =0$ makes it possible to obtain an explicit solution 
\begin{equation*}
\psi_u(x,z) = \frac{1+z}{1+u(x)}, \quad (x,z)\in \overline{\Omega(u)}, 
\end{equation*}
to \eqref{restatic1}-\eqref{restatic2}, where the deformation $u$ now satisfies the 
so-called \textit{small gap model}
\begin{empheq}{align}
\beta \Delta^2 u - \tau \Delta u&=- \frac{\lambda }{(1+u)^2}  &&  \hspace{-2.5cm} \text{ in } D,
\label{sgm1} \\[-0.07cm]
u = \Delta u -(1-\sigma) \kappa \partial_{\nu} u &= 0 && \hspace{-2.5cm} \text{ on } \partial  D.
\label{sgm2}
\end{empheq}
When Navier boundary conditions are considered, that is, when $\sigma =1$ in \eqref{sgm2}, this 
small gap model has been studied extensively, see e.g. the 
monograph \cite{EspositoGhoussoubGuo2010}.  
Roughly speaking, it is known that there is a threshold $\lambda_*>0$ such that there is at 
least one solution for $\lambda \in (0,\lambda_*)$, one solution for $\lambda = \lambda_*$, and 
no solution for $\lambda > \lambda_*$. 
Let us emphasize that the maximum principle is available in this case and turns out to be a 
major tool for the analysis. 
Recently we were able to show a sign-preserving result for the fourth-operator 
$\beta \Delta^2  - \tau \Delta $ under the boundary conditions \eqref{sgm2} in the case $D$ is 
convex with $\partial D \in C^{2,1}$ \cite{SweersVassi2018}. Taking advantage of this property, 
we might also expect a similar result on the pull-in threshold for the free boundary problem 
\eqref{restatic1}-\eqref{restatic4} with $\varepsilon >0$. The next section takes a step in that direction. 

\section{Main Results}
\label{mainresults}
Throughout the paper we assume that 
$D\subset \mathbb{R}^2$ is a bounded convex domain with  $C^4$-boundary,
and that the curvature $\kappa$ is nonnegative on $\partial D$. 

An important ingredient in the analysis of \eqref{restatic1}-\eqref{restatic4} is solving the 
elliptic problem \eqref{restatic1}-\eqref{restatic2} on the three-dimensional nonsmooth domain 
$\Omega(u)$ in dependence of the free boundary described by $u$. Also precise 
information on the trace of $ \nabla \psi_u$ on the elastic plate is required as a function of $u$. 
For this, for a given deflection $u$, one can transform the Laplace equation 
\eqref{restatic1}-\eqref{restatic2} for $\psi_u$ to a boundary value problem with Dirichlet data 
of the form 
\begin{empheq}{align*}
\mathcal{L}_u \phi_u &= 0  &&  \hspace{-3.5cm} \text{ in } \Omega, 
\\[-0.05cm]
\phi_u &=\eta  && \hspace{-3.5cm} \text{ on } \partial  \Omega 
\end{empheq}
for the transformed electrostatic potential 
\begin{equation*}
\phi_u(x,\eta):= \psi_u(x,(1+u(x))\eta -1), \quad (x,\eta) \in \Omega,
\end{equation*}
in the fixed cylinder $\Omega: = D\times (0,1)$.  The differential operator $- \mathcal{L}_u$ has 
coefficients depending on $u$, $\nabla u$ and $\Delta u$ and being singular at touchdown points 
$x\in D$ where $u(x)=-1$. However, for $u>-1$ in $D$, the operator $-\mathcal{L}_u$ is elliptic. 
Employing then elliptic regularity theory and pointwise multiplications in Sobolev spaces, one 
can show the following key result.

\begin{proposition}[\hspace{-0.01cm}{\cite[Proposition 2.1]{LaurencotWalker2016}}]
\label{electrostaticprop}
Define for given $\rho \in (0,1)$ the open subset 
\begin{align*}
S(\rho):= \big \{u \in W^2_3(D) :  \; & u=0 \text{ on } \partial D,  \, 
\Vert u \Vert_{W^2_3(D)} < 1/\rho, \\ &\text{ and } \; u(x) > -1+ \rho \; \text{ for } \, x \in D \big \}
\end{align*}
of $W^2_3(D)$ with closure 
\begin{align*}
\overline{S}(\rho):=\big \{u \in W^2_3(D) :  \; & v=0 \text{ on } \partial D,  \, 
\Vert u \Vert_{W^2_3(D)} \leq 1/\rho, \\&\text{ and } \; u(x) \geq -1+ \rho \; \text{ for } 
\, x \in D \big\}.
\end{align*}
Then, for each $u \in \overline{S}(\rho)$ there is a unique solution $\psi_u \in W^2_2(\Omega(u))$ to 
\eqref{restatic1}-\eqref{restatic2}, and there is a constant $c(\rho)>0$ such that 
\begin{equation}
\label{electrostaticlip}
\Vert \phi_{u_1} - \phi_{u_2} \Vert_{W^2_2(\Omega)} \leq c(\rho) \Vert u_1 - u_2 \Vert_{W^2_3(D)}, 
\quad u_1, u_2 \in \overline{S}(\rho). 
\end{equation}
Moreover, the mapping 
\begin{equation}
\label{rhs}
g: S(\rho) \rightarrow L_2(D), \quad  u \mapsto  \varepsilon^2 \vert \nabla' \psi_u (\cdot,u) \vert^2 + 
(\partial_z \psi_u (\cdot,u))^2
\end{equation}
is analytic, bounded, and globally Lipschitz continuous. 
\end{proposition}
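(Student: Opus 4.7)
The plan is to fix the geometry by the change of variables $\eta = (z+1)/(1+u(x))$ already suggested in the excerpt, so that $\phi_u(x,\eta) := \psi_u(x,(1+u(x))\eta - 1)$ lives on the fixed cylinder $\Omega = D \times (0,1)$ and satisfies $\mathcal{L}_u \phi_u = 0$ with $\phi_u = \eta$ on $\partial\Omega$. A direct computation shows that $\mathcal{L}_u$ is a second-order operator in $(x,\eta)$ whose coefficients are rational expressions in $1/(1+u)$, $\nabla u$, and $\Delta u$. On $\overline{S}(\rho)$, the denominator $1+u \geq \rho$ stays away from zero; moreover, since $D \subset \mathbb{R}^2$, the embedding $W^2_3(D) \hookrightarrow C^1(\overline{D})$ gives uniform $C^1$-bounds on $u$, while $\Delta u$ is only controlled in $L_3(D)$. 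This is enough to show that $\mathcal{L}_u$ is uniformly elliptic on $\overline{S}(\rho)$ with ellipticity constants depending only on $\rho$, and to ensure that its top-order coefficients are Lipschitz (in fact $C^1$) in $(x,\eta)$ while the lower-order ones lie in $L_3$.

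For the existence statement, I would first subtract the lifting $\eta$ to obtain a homogeneous Dirichlet problem $\mathcal{L}_u w = -\mathcal{L}_u(\eta) \in L_2(\Omega)$ with $w \in H^1_0(\Omega)$. Uniform ellipticity yields a unique weak solution through the Lax--Milgram lemma; $W^2_2$-regularity up to the boundary then follows from standard elliptic regularity for operators whose principal part has Lipschitz coefficients on the smooth cylinder $\Omega$ (using that $\partial D \in C^4$ and that $D \times \{0,1\}$ meets the lateral face at right angles after the change of variables, with the resulting edge being handled by reflection or by convexity of $D$). Uniqueness in $W^2_2(\Omega(u))$ for $\psi_u$ is inherited from uniqueness of $\phi_u$ via the bi-Lipschitz diffeomorphism.

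For the Lipschitz estimate \eqref{electrostaticlip}, I would test the difference equation
\begin{equation*}
\mathcal{L}_{u_1}(\phi_{u_1} - \phi_{u_2}) = (\mathcal{L}_{u_2} - \mathcal{L}_{u_1})\phi_{u_2} + (\mathcal{L}_{u_2} - \mathcal{L}_{u_1})(\text{boundary lift})
\end{equation*}
against suitable test functions, plus elliptic regularity. The right-hand side must be estimated in $L_2(\Omega)$ in terms of $\|u_1 - u_2\|_{W^2_3(D)}$; this is the main technical obstacle, since it requires pointwise multiplier estimates in Sobolev spaces to control products such as $(\Delta u_1 - \Delta u_2)\,\partial_\eta^2 \phi_{u_2}$ and to exploit the Lipschitz continuity of the rational coefficient map $u \mapsto \mathcal{L}_u$ on $\overline{S}(\rho)$. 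The bound $\|\phi_{u_2}\|_{W^2_2(\Omega)} \leq C(\rho)$ obtained in the previous step is what makes these products integrable.

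Finally, for the properties of $g$, the trace theorem gives $\nabla \phi_u|_{\eta=1} \in W^{1/2}_2(D) \hookrightarrow L_2(D)$ with norm controlled by $\|\phi_u\|_{W^2_2(\Omega)}$; the chain rule then expresses $\varepsilon^2|\nabla'\psi_u(\cdot,u)|^2 + (\partial_z\psi_u(\cdot,u))^2$ as a polynomial in the traces of $\nabla \phi_u$ and in $\nabla u$, divided by powers of $1+u$. Boundedness and global Lipschitz continuity of $g$ on $S(\rho)$ follow by combining the already established Lipschitz estimate with multiplication and embedding inequalities. Analyticity can be obtained by viewing $\phi_u$ as the value at $u$ of the analytic implicit-function map associated with $(u,\phi) \mapsto \mathcal{L}_u \phi$: the map $u \mapsto \mathcal{L}_u$ is rational (hence analytic) from $\overline{S}(\rho)$ into an appropriate space of operators, and inversion of a uniformly elliptic operator is analytic, so the composition that defines $g$ is analytic as well.
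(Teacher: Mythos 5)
First, be aware that the paper does not prove Proposition \ref{electrostaticprop} itself: it is imported verbatim from \cite[Proposition 2.1]{LaurencotWalker2016}, and the only indication of method given here is the sentence preceding the statement (transform \eqref{restatic1}--\eqref{restatic2} into a Dirichlet problem for $\mathcal{L}_u$ on the fixed cylinder $\Omega=D\times(0,1)$, then use ``elliptic regularity theory and pointwise multiplications in Sobolev spaces''). Your plan follows exactly this route, so strategically you are aligned with the cited proof.

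There is, however, one claim in your outline that is false and on which your regularity step leans. Since $D\subset\mathbb{R}^2$, the embedding $W^2_3(D)\hookrightarrow C^{1,1/3}(\overline{D})$ gives only H\"older continuity of $\nabla u$; the principal coefficients of $\mathcal{L}_u$, being rational in $1/(1+u)$ and $\nabla u$, are therefore only $C^{0,1/3}$ on $\overline{S}(\rho)$ --- not Lipschitz and certainly not $C^1$. You thus cannot invoke ``standard elliptic regularity for operators whose principal part has Lipschitz coefficients''; the $W^2_2$-regularity up to the edges of the cylinder is precisely the delicate point that makes this proposition worth citing rather than reproving, and it must be obtained by combining a Grisvard-type $H^2$-estimate on the convex domain $D\times(0,1)$ for a frozen (constant-coefficient) principal part with a localization and perturbation argument that uses only continuity of the principal coefficients, while the first-order terms, whose coefficients ($\partial^2 u$) lie merely in $L_3(D)$, are absorbed via $\nabla\phi\in W^1_2(\Omega)\hookrightarrow L_6(\Omega)$. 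Two smaller slips of the same nature: in the difference equation for \eqref{electrostaticlip} the $L_3$-coefficients multiply \emph{first}-order (not second-order) derivatives of $\phi_{u_2}$, which is exactly why those products land in $L_2(\Omega)$ through $\tfrac13+\tfrac16=\tfrac12$; and for \eqref{rhs} the trace $\nabla\phi_u|_{\eta=1}\in W^{1/2}_2(D)$ must be used through the embedding $W^{1/2}_2(D)\hookrightarrow L_4(D)$ (not merely $L_2(D)$) so that its square lies in $L_2(D)$. With these repairs your outline matches the argument of \cite[Proposition 2.1]{LaurencotWalker2016}.
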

Note that the function $g$ from Proposition \ref{electrostaticprop} is less regular than in the case 
of a two-dimensional domain $\Omega (u)$ studied in \cite{EscherLaurencotWalker2014, 
LaurencotWalker2013, LaurencotWalkerI, LaurencotWalkerII, LaurencotWalkerESAIM}. 

In the companion paper \cite{NikP2020} 
a parabolic evolution model pertaining to the stationary model 
\eqref{restatic1}-\eqref{restatic4} is discussed. The dynamic boundary condition reads 
\begin{align}
\partial_t u + \beta& \Delta^2 u - \tau \Delta u \nonumber
\\[-0.05cm]
&= - \lambda \left( \varepsilon^2 \vert \nabla' \psi_{u(t)} (x,u(t,x)) \vert^2 + (\partial_z \psi_{u(t)}
 (x,u(t,x)))^2 \right), \quad t>0, \, x \in D,  \label{evolutioneq}
\end{align}
which is derived by means of Newton's second law. 
According to Proposition \ref{electrostaticprop}, one can rewrite the evolution problem as a single 
semilinear Cauchy problem for the deflection $u$ that one can then solve using semigroup theory. 
We recall here the local-in-time well-posedness result from \cite{NikP2020} .

\begin{theorem}
	\label{timewellposedness}
Consider an initial value
\begin{equation*}
u^0 \in W^4_2(D) \: \; \text{ such that } \: \; u^0> -1 \:\text{ in } D \:\;\text{ and } \:\; u^0=\Delta u^0-(1-\sigma)\kappa 
\partial_{\nu} u^0=0 \: \text{ on } \partial D.
\end{equation*}
Then, for any value of $\lambda >0$, there is a unique solution $(u(t), \psi_{u(t)})$ to the model with 
the evolution equation \eqref{evolutioneq} on the maximal interval of existence $[0,T_m)$ with 
regularity
\begin{align*}
&u\in C([0,T_m), W^4_2(D)) \cap C^1([0,T_m), L_2(D)), \\
&\psi_{u(t)} \in W^2_2(\Omega(u(t))),
\end{align*}
satisfying 
\begin{equation*}
u(t,x)>-1, \quad (t,x) \in [0,T_m)\times D. 
\end{equation*}
\end{theorem}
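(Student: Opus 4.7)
The plan is to recast the coupled parabolic--elliptic system as an abstract semilinear Cauchy problem on $L_2(D)$ of the form $\dot u + A u = -\lambda g(u)$, $u(0) = u^0$, where $g$ is the analytic, globally Lipschitz map from Proposition \ref{electrostaticprop} and $A := \beta \Delta^2 - \tau \Delta$ is the hinged-plate operator, and then to apply the standard theory of semilinear equations with sectorial generator (as in Lunardi or Henry).

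First I would verify that $-A$, endowed with the domain $\{u \in W^4_2(D) : u = \Delta u - (1-\sigma)\kappa \partial_\nu u = 0 \text{ on } \partial D\}$, generates an analytic semigroup on $L_2(D)$. The natural bilinear form associated with $A$ on $H^2(D) \cap H^1_0(D)$ is coercive by the chain of inequalities already established in Section \ref{geu} (which rely on $|\sigma|<1$), combined with Poincaré's inequality and the assumption $\kappa \geq 0$ on $\partial D$ to dispose of the boundary contribution. Elliptic regularity together with $\partial D \in C^4$ then yields the expected characterization of $D(A)$ and the sectoriality of $A$ as a positive self-adjoint operator on $L_2(D)$.

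Next, given an initial value $u^0$ satisfying the hypotheses of the theorem, the Sobolev embeddings in dimension two give $W^4_2(D) \hookrightarrow C^2(\overline D) \hookrightarrow W^2_3(D)$, so that $u^0 \in \overline S(\rho_0)$ for some $\rho_0 \in (0,1)$. By Proposition \ref{electrostaticprop}, $g \colon \overline S(\rho_0) \to L_2(D)$ is globally Lipschitz continuous, and via the continuous embedding $D(A) \hookrightarrow W^2_3(D)$ it becomes a Lipschitz map from a neighborhood of $u^0$ in $D(A)$ into $L_2(D)$. A contraction argument on a closed ball in $C([0,\tau], D(A)) \cap C^1([0,\tau], L_2(D))$ centered at the constant path $u^0$, together with the standard mild-to-strong upgrade available for analytic semigroups when $u^0 \in D(A)$ and the right-hand side is continuous with values in $L_2(D)$, produces the local solution on some interval $[0,\tau]$; by continuity in time, $u(t)$ remains in $S(\rho_0/2)$ for small $t$, so the nonlinearity stays well-defined. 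One then iterates this local construction and defines $T_m$ as the supremum of times on which a solution with the stated regularity and satisfying $u(t,x) > -1$ exists.

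The main obstacle is the singular character of $g$ as $u$ approaches $-1$: the Lipschitz constant $c(\rho)$ in \eqref{electrostaticlip} degenerates as $\rho \to 0$, which is precisely why the construction must be carried out on the strict subsets $\overline S(\rho)$ and why no global-in-time existence is to be expected without further a priori bounds. A secondary subtlety lies in showing that $A$ with the Steklov-type boundary condition is indeed sectorial on $L_2(D)$; here the combination $\partial D \in C^4$, $\kappa \geq 0$ and $|\sigma|<1$, i.e.\ exactly the standing assumptions of Section \ref{mainresults}, is what makes the elliptic theory go through cleanly.
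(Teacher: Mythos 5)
Your overall strategy is exactly the one the paper intends: Theorem \ref{timewellposedness} is only recalled here from the companion paper, and the text preceding it says precisely that one rewrites the problem as a semilinear Cauchy problem $\partial_t u + Au = -\lambda g(u)$ via Proposition \ref{electrostaticprop} and solves it by semigroup theory (with the generation property supplied by Proposition \ref{semigroup}, which the paper gets from Amann's theorem rather than from your form/coercivity argument --- a harmless difference). The reduction, the use of $W^4_2(D)\hookrightarrow W^2_3(D)$ to place $u^0$ in some $\overline{S}(\rho_0)$, and the definition of $T_m$ by continuation are all as expected.

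There is, however, a genuine soft spot in your fixed-point step. You propose a contraction on a ball in $C([0,\tau],D(A))$ treating $g$ merely as a Lipschitz map from $D(A)=W^4_{2,\mathcal B}(D)$ into $L_2(D)$. This does not close: estimating the $D(A)$-norm of the Duhamel term requires $\int_0^t \Vert A e^{-(t-s)A}\Vert_{\mathcal L(L_2(D))}\,ds$, and $\Vert A e^{-(t-s)A}\Vert \sim (t-s)^{-1}$ is not integrable, so no small-time contraction constant is produced. Likewise, the ``mild-to-strong upgrade'' you invoke is false for analytic semigroups under mere continuity of the forcing in $L_2(D)$; one needs H\"older continuity in time or continuity with values in an intermediate space. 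The cure is already in your hands but must be used in the other direction: $g$ is Lipschitz from $W^2_3(D)$, and $W^2_3(D)$ contains an interpolation space $D(A^\theta)$ with $\theta<1$ (in two dimensions $W^{s}_2(D)\hookrightarrow W^2_3(D)$ for $s>7/3$, so any $\theta\in(7/12,1)$ works). Running the contraction in $C([0,\tau],D(A^\theta))$ makes the singularity $(t-s)^{-\theta}$ integrable; the resulting solution is then H\"older continuous in time with values in $D(A^\theta)$, hence $t\mapsto g(u(t))$ is H\"older continuous in $L_2(D)$, and since $u^0\in D(A)$ the standard regularity theory upgrades the mild solution to one in $C([0,T_m),W^4_2(D))\cap C^1([0,T_m),L_2(D))$. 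With that correction the argument is complete and coincides with the route taken in the companion paper.
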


The plan of the present paper is first to obtain an existence result for the stationary free boundary 
problem \eqref{restatic1}-\eqref{restatic4}. We prove that for sufficiently small values of  
$\lambda$ there exists a solution $(u,\psi_u)$ to \eqref{restatic1}-\eqref{restatic4} that is 
asymptotically stable. 

\begin{theorem}
	\label{existencestat}
	Let $\rho \in (0,1)$ be fixed. 
	\begin{enumerate}
		\item[(i)]  There are $\lambda_s= \lambda_s(\rho)>0$ and an analytic 
		function $[0, \lambda_s)\rightarrow W^{4}_{2}(D)$, $\lambda 
		\mapsto U_{\lambda}$, such that $(U_{\lambda}, \Psi_{U_{\lambda}})$ 
		is for each $\lambda \in (0, \lambda_s)$ the unique solution 
		of \eqref{restatic1}-\eqref{restatic4} with 
		\begin{equation*}
      	U_{\lambda} \in S(\rho) \: \; \text{ and } \: \; \Psi_{U_{\lambda}} \in W^2_2(\Omega(U_{\lambda})).
		\end{equation*}
		Moreover, $-1< U_{\lambda} \leq 0$ in $D$. \\
		\item[(ii)]  Let $\lambda \in (0, \lambda_s)$. There are numbers $\varpi_0, r_0, R>0$ such that 
		for each initial value $u^0 \in W^{4}_{2}(D)$ satisfying $u^0=\Delta u^0-(1-\sigma)\kappa 
		\partial_{\nu} u^0=0$ on $\partial D$ and 
		\begin{equation*}
	      \Vert u^0- U_{\lambda} \Vert_{W^{4}_{2}(D)} < r_0
		\end{equation*}
        there is a unique global solution $(u(t), \psi_{u(t)})$ of the associated 
        parabolic evolution problem to \eqref{restatic1}-\eqref{restatic4} with
		\begin{align*}
		&u \in C([0,\infty),W^{4}_{2}(D)) \cap C^1([0,\infty),L_2(D)), 
		\\
		&\psi_{u(t)} \in W^2_2(\Omega(u(t))), \quad t \geq 0, 
		\end{align*}
		and $u(t)>-1$ in $D$ for each $t \geq 0$. 
		Moreover, 
		\begin{equation}
		\label{PLS}
		\Vert u(t)- U_{\lambda} \Vert_{W^{4}_{2}(D)} + \Vert \partial_t u(t) \Vert_{L_2(D)}
		\leq R e^{-\varpi_0 t}\Vert u^0- U_{\lambda} \Vert_{W^{4}_{2}(D)}, \quad t \geq 0.
		\end{equation}
	\end{enumerate}
\end{theorem}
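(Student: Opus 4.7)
The strategy is to apply the analytic implicit function theorem at $(\lambda,u)=(0,0)$. Fix the Banach space
\begin{equation*}
X := \bigl\{ u \in W^4_2(D) : u = \Delta u - (1-\sigma)\kappa\,\partial_\nu u = 0 \text{ on } \partial D\bigr\},
\end{equation*}
and, thanks to the Sobolev embedding $W^4_2(D)\hookrightarrow W^2_3(D)$ in two dimensions, consider the open set $X\cap S(\rho)$. Define
\begin{equation*}
F : \R \times (X\cap S(\rho)) \longrightarrow L_2(D), \qquad F(\lambda,u) := \beta\Delta^2 u - \tau\Delta u + \lambda\, g(u),
\end{equation*}
with $g$ as in \eqref{rhs}. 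By Proposition~\ref{electrostaticprop}, $g$ is analytic and globally Lipschitz on $S(\rho)$, so $F$ is analytic in both variables. Clearly $F(0,0)=0$, and $\partial_u F(0,0) = \beta\Delta^2 - \tau\Delta : X \to L_2(D)$. Under the present assumptions on $D$ (convex, $C^4$, with $\kappa\geq 0$), the sign-preserving and elliptic regularity results from \cite{SweersVassi2018} guarantee that this linear operator is an isomorphism. The analytic IFT then produces $\lambda_s(\rho)>0$ and an analytic map $\lambda\mapsto U_\lambda$ with $F(\lambda,U_\lambda)=0$ and $U_\lambda\in S(\rho)$.

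\textbf{Sign and uniqueness.} Since $g(U_\lambda)\geq 0$ and $\beta\Delta^2-\tau\Delta$ preserves sign under the Steklov-type conditions, the identity $(\beta\Delta^2-\tau\Delta)U_\lambda = -\lambda\, g(U_\lambda)\le 0$ yields $U_\lambda\le 0$. For uniqueness in $S(\rho)$: if $u_1,u_2\in S(\rho)$ both solve the problem, subtracting the equations and applying the isomorphism property together with the Lipschitz bound from \eqref{electrostaticlip} gives, for a constant $c=c(\rho)$,
\begin{equation*}
\|u_1-u_2\|_{W^4_2(D)} \le c\lambda\,\|u_1-u_2\|_{W^2_3(D)} \le c'\lambda\,\|u_1-u_2\|_{W^4_2(D)},
\end{equation*}
so shrinking $\lambda_s$ if necessary forces $u_1=u_2$. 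The $W^4_2$-regularity of $U_\lambda$ is automatic from elliptic regularity for $\beta\Delta^2-\tau\Delta$ with right-hand side in $L_2(D)$.

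\textbf{Plan for part (ii).} I would invoke the principle of linearized stability in the spirit of Lunardi for quasilinear/semilinear parabolic problems. Writing $u = U_\lambda + w$ and linearizing, the generator is
\begin{equation*}
\mathcal{A}_\lambda := \beta\Delta^2 - \tau\Delta + \lambda\, g'(U_\lambda)
\end{equation*}
acting on $L_2(D)$ with domain $X$. For $\lambda = 0$ the operator $\beta\Delta^2-\tau\Delta$ on $X$ is self-adjoint, positive, and coercive (from the nonnegativity of $E_m$ and the Steklov boundary conditions), hence has spectrum in $[\mu_1,\infty)$ with $\mu_1>0$; since $g$ is analytic on $S(\rho)$ with values in $L_2(D)$, the Fréchet derivative $g'(U_\lambda)$ is a bounded linear perturbation of small norm as $\lambda\to 0$. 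Standard sectorial perturbation theory then places $\sigma(\mathcal{A}_\lambda)\subset\{\mathrm{Re}\,z \geq 2\varpi_0\}$ for some $\varpi_0\in(0,\mu_1/2)$ once $\lambda_s$ is chosen small enough. The semigroup $e^{-t\mathcal{A}_\lambda}$ is thus exponentially decaying on $L_2(D)$, and together with Theorem~\ref{timewellposedness} (which provides the local well-posedness) the usual contraction argument in a small ball around $U_\lambda$ in $W^4_2(D)$ yields the global solution and the estimate~\eqref{PLS}.

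\textbf{Main obstacle.} The delicate point is verifying that the linearization $\mathcal{A}_\lambda$ enjoys the sectoriality and spectral gap properties needed for linearized stability in the chosen functional setting; this reduces to (a) the isomorphism/sign-preserving result from \cite{SweersVassi2018} for the unperturbed operator, which provides the positive spectral gap at $\lambda=0$, and (b) the smoothness of $g$ from Proposition~\ref{electrostaticprop}, which controls $g'(U_\lambda)$ as a relatively bounded perturbation. Once these are in place, the rest of the argument is a rather standard invocation of the analytic IFT in part (i) and of the abstract linearized stability theorem in part (ii).
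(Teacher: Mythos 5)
Your proposal follows essentially the same route as the paper: the analytic Implicit Function Theorem at $(\lambda,u)=(0,0)$ for part (i) (the paper writes $F(\lambda,v)=v+\lambda A^{-1}g(v)$ rather than the PDE residual, but this is the same argument once $A=\beta\Delta^2-\tau\Delta$ is known to be invertible on $W^4_{2,\mathcal{B}}(D)$, which the paper gets from Proposition \ref{semigroup}), the sign-preserving result of \cite{SweersVassi2018} for $U_\lambda\le 0$, and the principle of linearized stability (Lunardi) after noting that $\lambda Dg(U_\lambda)$ is a small perturbation of $A$ for part (ii). Your added contraction argument for uniqueness in all of $S(\rho)$ is a useful explicit supplement to what the paper leaves implicit, and the whole proposal is correct.
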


The same statement for Dirichlet boundary conditions in place of the hinged ones 
\eqref{restatic4} is proved in \cite[Theorem 1.2]{LaurencotWalker2016}, 
except for the nonpositivity of $U_{\lambda}$. 
This property holds true if $D$ is the unit ball in $\mathbb{R}^2$ so that a sign-preserving 
property in radial symmetry is available according to \cite{LaurencotWalkersign}. 
Note again that in the situation considered herein, 
where $D$ is an arbitrary two-dimensional convex domain and 
where hinged boundary conditions are used, 
a sign-preserving property holds, see \cite{SweersVassi2018}.

Theorem \ref{existencestat} is proven in Section \ref{exsmall} with the help of 
the Implicit Function Theorem 
for part (i) and the Principle of Linearized Stability for part (ii).  Let us remark that Theorem 
\ref{existencestat}  provides a unique stationary solution with first component in $S(\rho)$
for small values of $\lambda$. 
Yet, an open problem is whether there are other solutions for such values of $\lambda$ and what 
one can say about their stability or instability. \\
\\
Our second result shows that there is an upper threshold for $\lambda$ above 
which no solution to \eqref{restatic1}-\eqref{restatic4} exists. 
The proof techniques involved rely on a lower bound of 
$\partial_z \psi_{u}$ on $\mathfrak{G}_u$, see Lemma \ref{nonexistencemax} below, 
and on a positive eigenfunction 
$\varphi_1 \in W^4_2(D)$ associated to a positive eigenvalue $\mu_1>0$ of the operator $\beta 
\Delta^2-\tau \Delta$ subject to the hinged boundary conditions \eqref{restatic4}, which is 
established in the Appendix. 
A similar nonexistence result for large values of $\lambda$ is true when $D$ is the unit ball 
in $\mathbb{R}^2$ and when the hinged boundary conditions 
are replaced by Dirichlet boundary conditions, see \cite[Theorem 1.3]{LaurencotWalker2016}. 
The proof, however, follows a completely different path.

\begin{theorem}
	\label{nonexistencestat}
	Suppose that $\lambda \geq \lambda_{ns}:=\mu_1$. Then there is no solution $(u,\psi_u)$ 
	to \eqref{restatic1}-\eqref{restatic4} with regularity 
	\begin{equation*}
	u \in W^4_2(D), \quad \psi_u \in W^2_2(\Omega(u))
	\end{equation*}
	so that $u(x)>-1$ for $x\in D$. 
\end{theorem}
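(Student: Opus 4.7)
The plan is to argue by contradiction via a test-function argument against a positive eigenfunction of the hinged biharmonic-Laplace operator. Let $\varphi_1 \in W^4_2(D)$ denote the positive eigenfunction associated with the positive eigenvalue $\mu_1 > 0$ of $\beta \Delta^2 - \tau \Delta$ under the boundary conditions \eqref{restatic4}, constructed in the Appendix; the strict positivity $\varphi_1 > 0$ inside $D$ relies on the sign-preserving property valid under the standing hypotheses on $D$ (bounded and convex with $\kappa \geq 0$ on $\partial D$). Suppose toward a contradiction that $\lambda \geq \mu_1$ and that there exists a solution $(u, \psi_u)$ to \eqref{restatic1}-\eqref{restatic4} with $u \in W^4_2(D)$, $\psi_u \in W^2_2(\Omega(u))$, and $u > -1$ in $D$.

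Multiplying \eqref{restatic3} by $\varphi_1$ and integrating over $D$, I use that $u$ and $\varphi_1$ satisfy the same hinged boundary conditions on $\partial D$ to transfer $\beta \Delta^2 - \tau \Delta$ onto $\varphi_1$ through two applications of Green's formula, all boundary contributions cancelling. The eigenvalue equation then yields the identity
\begin{equation*}
\mu_1 \int_D u \, \varphi_1 \, dx = -\lambda \int_D g(u) \, \varphi_1 \, dx,
\end{equation*}
where $g(u) = \varepsilon^2 \vert \nabla' \psi_u(\cdot, u)\vert^2 + (\partial_z \psi_u(\cdot, u))^2$ is the right-hand side of \eqref{restatic3}.

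I next invoke Lemma \ref{nonexistencemax} to produce a pointwise lower bound on $\partial_z \psi_u$ along $\mathfrak{G}_u$, which, in conjunction with the relation $\nabla' \psi_u(x, u(x)) = -\nabla u(x) \, \partial_z \psi_u(x, u(x))$ obtained by differentiating $\psi_u(x, u(x)) = 1$, should translate into a lower bound of the form $g(u) \geq (1+u)^{-2}$. Substituting this and using $\varphi_1 \geq 0$ gives
\begin{equation*}
\lambda \int_D \frac{\varphi_1}{(1+u)^2} \, dx \leq -\mu_1 \int_D u \, \varphi_1 \, dx = \mu_1 \int_D (-u) \, \varphi_1 \, dx.
\end{equation*}
For every $u > -1$ the elementary pointwise identity
\begin{equation*}
\frac{1}{(1+u)^2} + u = \frac{1 + u + 2u^2 + u^3}{(1+u)^2} = \frac{(1+u) + u^2(2+u)}{(1+u)^2} > 0
\end{equation*}
gives $1/(1+u)^2 > -u$, and since $\varphi_1 > 0$ in $D$ one obtains the strict inequality $\int_D \varphi_1/(1+u)^2 \, dx > \int_D (-u) \varphi_1 \, dx$. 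Combined with the previous display this forces $\lambda < \mu_1$, contradicting the assumption $\lambda \geq \mu_1$.

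The main obstacle is Lemma \ref{nonexistencemax} itself, that is, the quantitative lower bound on $\partial_z \psi_u$ along the free upper boundary $\mathfrak{G}_u$. In the small-gap limit the potential reduces to the explicit affine function $(1+z)/(1+u)$ and such a bound is immediate, but in the present three-dimensional setting $\psi_u$ solves the anisotropic elliptic problem \eqref{restatic1}-\eqref{restatic2} on the genuinely curved domain $\Omega(u)$, so the bound has to be recovered by a Hopf-type/maximum-principle comparison, most naturally carried out on the fixed cylinder $\Omega = D \times (0,1)$ in terms of the transformed potential $\phi_u$ introduced just before Proposition \ref{electrostaticprop}. By contrast, the twofold integration by parts against $\varphi_1$ is routine once the hinged boundary conditions are fully exploited and parallels the derivation of the Euler-Lagrange equation performed in Section \ref{deriv}.
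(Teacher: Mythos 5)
Your overall skeleton --- testing \eqref{restatic3} against the positive eigenfunction $\varphi_1$, integrating by parts twice so that the matching hinged boundary conditions cancel all boundary terms, and feeding in a pointwise lower bound for $g(u)$ coming from a maximum-principle comparison for $\psi_u$ --- is exactly the route the paper takes, and the duality identity $\mu_1\int_D u\,\varphi_1\,dx=-\lambda\int_D g(u)\,\varphi_1\,dx$ is correct. But there is a genuine gap: Lemma \ref{nonexistencemax} is the substance of the nonexistence proof, not a black box you may cite, and you neither prove it nor state its conclusion correctly. Its proof requires two preliminary facts you never establish: first, $-1<u\leq 0$ in $D$, which follows from the sign-preserving property of $\beta\Delta^2-\tau\Delta$ under hinged conditions applied to $\beta\Delta^2u-\tau\Delta u=-\lambda G\leq 0$; second, $\Delta u\geq 0$ in $D$, obtained by splitting the fourth-order problem into the second-order system $-\beta\Delta v+\tau v=-\lambda G$ in $D$ with $v=-(1-\sigma)\kappa\,\partial_\nu u\leq 0$ on $\partial D$ (here convexity, $\kappa\geq 0$ and $u\leq 0$ enter) and $-\Delta u=v$, and applying the classical maximum principle twice. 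Only then does the \emph{affine} barrier $M(x,z)=1+z-u(x)$ satisfy $-\varepsilon^2\Delta'M-\partial_z^2M=\varepsilon^2\Delta u\geq 0$ together with $M\geq\psi_u$ on $\partial\Omega(u)$, so that the maximum principle and a difference quotient at $z=u(x)$ yield $\partial_z\psi_u(x,u(x))\geq 1$.

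The bound you then assert, $g(u)\geq(1+u)^{-2}$, is strictly stronger than what this comparison delivers and is not justified: the barrier $(1+z)/(1+u(x))$ that would produce it does not obviously satisfy the required differential inequality for $\varepsilon>0$, since $-\varepsilon^2\Delta'\bigl[(1+u)^{-1}\bigr]$ has no sign in general. What the lemma actually gives is $\partial_z\psi_u(\cdot,u)\geq 1$, hence, via $\nabla'\psi_u=-\nabla u\,\partial_z\psi_u$ on $\mathfrak{G}_u$, $g(u)=(1+\varepsilon^2|\nabla u|^2)(\partial_z\psi_u(\cdot,u))^2\geq 1$. This weaker bound still closes the argument, and more simply than your elementary identity: it yields $\lambda\int_D\varphi_1\,dx\leq\mu_1\int_D(-u)\,\varphi_1\,dx<\mu_1\int_D\varphi_1\,dx$ because $-u<1$, whence $\lambda<\mu_1$. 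So replace the unproved pointwise inequality by $g(u)\geq 1$ and supply the proof of the lemma along the lines above; as written, the central analytic step of the theorem is missing.
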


The proof of Theorem \ref{nonexistencestat} is given in Section \ref{nonexlarge}, but it leaves open the question whether the values $\lambda_s$ and $\lambda_{ns}$ 
coincide or not.

\section{Existence for Small Voltage Values: Proof of Theorem \ref{existencestat}}
\label{exsmall}
We first prove Theorem \ref{existencestat}. We write $W^4_{2,\mathcal{B}}(D)$ 
for the subspace of $W^4_2(D)$ 
consisting of functions 
$u$ satisfying the hinged boundary conditions  \eqref{restatic4}. 
Next we introduce the operator $A \in \mathcal{L}(W^4_{2,\mathcal{B}}(D),L_2(D))$ by 
\begin{equation}
\label{operatorA}
Au:= \beta \Delta^2 u -\tau \Delta u, \quad u \in W^4_{2,\mathcal{B}}(D),
\end{equation}
and recall the following properties from \cite{NikP2020}.

\begin{proposition}
	\label{semigroup}
	The operator $-A$ generates a strongly continuous analytic semigroup on $L_2(D)$ 
	with spectrum contained in $[ \textnormal{Re}z<0 ]$.
\end{proposition}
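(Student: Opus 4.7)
The plan is to identify $A$ with the Friedrichs realization on $L_2(D)$ of the natural energy form for the hinged plate, and then to invoke standard semigroup theory for positive self-adjoint operators. First, I would introduce the Hilbert space $V:= \{u \in W^2_2(D) : u = 0 \text{ on } \partial D\}$, equipped with the $W^2_2$-norm, and define the symmetric continuous bilinear form $a: V \times V \to \mathbb{R}$ by
\begin{equation*}
a(u,v) := \beta \int_D \left( \Delta u\, \Delta v + (1-\sigma) \big[ 2 \partial_{x_1}\partial_{x_2} u\, \partial_{x_1}\partial_{x_2} v - \partial_{x_1}^2 u\, \partial_{x_2}^2 v - \partial_{x_2}^2 u\, \partial_{x_1}^2 v \big] \right) dx + \tau \int_D \nabla u \cdot \nabla v\, dx.
\end{equation*}
Setting $u=v$ and repeating the chain of Young inequalities already carried out in Section \ref{geu} yields
\begin{equation*}
a(u,u) \geq \tfrac{\beta(1-|\sigma|)}{2} \sum_{i,j=1}^2 \|\partial_{x_i}\partial_{x_j} u\|_{L_2(D)}^2 + \tau \|\nabla u\|_{L_2(D)}^2,
\end{equation*}
which, together with the Poincar\'e inequality applied to $u$ (vanishing on $\partial D$) and to its first-order derivatives, gives the coercivity estimate $a(u,u) \geq c \|u\|_V^2$ for some $c>0$.

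Friedrichs' construction then produces a unique positive self-adjoint operator $\widetilde A$ on $L_2(D)$ with domain
\begin{equation*}
D(\widetilde A) = \left\{ u \in V : a(u, \cdot) \text{ extends continuously to } L_2(D) \right\}
\end{equation*}
and satisfying $\langle \widetilde A u, v\rangle_{L_2(D)} = a(u,v)$ for all $u \in D(\widetilde A)$ and $v \in V$. Since the embedding $V \hookrightarrow L_2(D)$ is compact, $\widetilde A$ has compact resolvent, so $\sigma(\widetilde A) \subset [c,\infty)$ is a discrete sequence of positive eigenvalues accumulating at $+\infty$.

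Next, I would show that $D(\widetilde A) = W^4_{2,\mathcal{B}}(D)$ and $\widetilde A = A$. The easy inclusion $W^4_{2,\mathcal{B}}(D) \subset D(\widetilde A)$ follows from the integration-by-parts computation leading to \eqref{fvariationmechanic1}: for $u \in W^4_{2,\mathcal{B}}(D)$ and $v \in V$, the boundary term $B\int_{\partial D}(\Delta u - (1-\sigma)\kappa \partial_\nu u)\partial_\nu v\, d\omega$ vanishes thanks to the Steklov-type condition in \eqref{restatic4}, so $a(u,v) = \int_D (Au)\, v\, dx$ and $\widetilde A u = Au$. The reverse inclusion is the main technical point: given $u \in V$ with $a(u,\cdot) = (f,\cdot)_{L_2(D)}$ for some $f \in L_2(D)$, one has to boost the regularity of $u$ from $W^2_2$ to $W^4_2$ and, by applying the Fundamental Lemma on $\partial D$, recover the natural boundary condition $\Delta u = (1-\sigma)\kappa \partial_\nu u$. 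This is a fourth-order $L_2$ elliptic regularity statement for an equation supplemented with the non-standard Steklov-type conditions; these can be checked to satisfy the complementing (Lopatinskii) condition along a $C^4$-smooth boundary, so the claim fits into the Agmon-Douglis-Nirenberg framework. Carrying out this elliptic regularity step is the principal obstacle.

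Once $A = \widetilde A$ is established, positivity and self-adjointness together with the spectral theorem (equivalently, the classical fact that every self-adjoint operator bounded below generates an analytic semigroup of contractions on a Hilbert space after a suitable shift) yield that $-A$ generates a strongly continuous analytic semigroup on $L_2(D)$. Furthermore, $\sigma(-A) = -\sigma(A) \subset (-\infty, -c]\subset \{\textnormal{Re}\, z < 0\}$, which is the claimed spectral containment.
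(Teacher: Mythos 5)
Your argument is correct in substance, but it follows a genuinely different route from the paper. The paper recalls Proposition \ref{semigroup} from the companion work and proves it by invoking Amann's generation theorem for normally elliptic boundary value problems (\cite[Theorem 4.1]{Amann1993}): one verifies normal ellipticity of $\beta\Delta^2-\tau\Delta$ and the Lopatinskii--Shapiro condition for the hinged boundary operators on the $C^4$-boundary, and the theorem delivers the resolvent estimates in a sector, hence the analytic semigroup; the spectral bound then comes from positivity of the associated form. You instead build the operator variationally from the mechanical energy: coercivity of the form on $V=W^2_2(D)\cap W^1_{2,0}(D)$, Friedrichs/Lax--Milgram, self-adjointness, compact resolvent, and the spectral theorem. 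Your route buys self-adjointness and the quantitative spectral gap $\sigma(A)\subset[c,\infty)$ essentially for free and ties the operator directly to the energy $E_m$ used in Section \ref{deriv}; Amann's route avoids symmetry, works in $L_p$, and -- crucially -- already packages the elliptic regularity that you must supply by hand. Two points to tighten. First, the identification $D(\widetilde A)=W^4_{2,\mathcal{B}}(D)$ is not a side remark: it is exactly the $W^4_2$-regularity statement for the Steklov-type problem that the paper outsources (cf.\ \cite[Theorem 2.20]{GazzolaGrunauSweers2010}, used in the Appendix), and without it you only obtain a semigroup on $L_2(D)$ generated by some self-adjoint realization, not by the operator $A$ with domain $W^4_{2,\mathcal{B}}(D)$ as defined in \eqref{operatorA}; so you should cite or prove that regularity explicitly. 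Second, ``Poincar\'e applied to the first-order derivatives'' is not literally available since $\nabla u$ does not vanish on $\partial D$; the correct step is $\|\nabla u\|_{L_2}^2=-\int_D u\,\Delta u\,dx\le \|u\|_{L_2}\|\Delta u\|_{L_2}$ combined with Poincar\'e for $u$, which yields $\|u\|_{W^2_2}\le C\|D^2u\|_{L_2}$ and hence coercivity even when $\tau=0$ (which is allowed, as $\tau\ge 0$).
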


The proof of this proposition is based on \cite[Theorem 4.1]{Amann1993}, 
which requires the $C^4$-regularity of the boundary $\partial D$. \\
\\
Our proof of Theorem \ref{existencestat} follows similar lines as in \cite[Theorem 1.2]{LaurencotWalker2016}. 
For Theorem \ref{existencestat}(i), we notice that $W^{4}_{2}(D)$ is continuously embedded in $W^{2}_{3}(D)$ and 
recall that $g$ defined in \eqref{rhs} is an analytic map $S(\rho) \rightarrow L_2(D)$.  Thus, since 
$A \in \mathcal{L}(W^4_{2,\mathcal{B}}(D),L_2(D))$ is invertible by Proposition \ref{semigroup}, we 
obtain that the map
\begin{equation*}
F: \mathbb{R}\times (W^{4}_{2,\mathcal{B}}(D) \cap S(\rho)) \rightarrow 
W^{4}_{2,\mathcal{B}}(D), \quad (\lambda, v) \mapsto v + 
\lambda A^{-1}g(v)
\end{equation*}
is analytic with $F(0,0)=0$ and $D_v F(0,0)= \text{id}_{W^{4}_{2,\mathcal{B}}(D)}$. According to the 
Implicit Function Theorem, there is $\lambda_s= \lambda_s(\rho)>0$ and an analytic map 
\begin{equation*}
[\lambda  \mapsto U_{\lambda} ]: [0,\lambda_s)\rightarrow W^{4}_{2,\mathcal{B}}(D)
\end{equation*}
such that $U_0=0$ and $F(\lambda, U_{\lambda})=0$ for $\lambda \in [0,\lambda_s)$. 
For $\lambda \neq 0$, let $\Psi_{U_{\lambda}}$ be the potential 
associated with $U_{\lambda}$. Then $(U_{\lambda}, \Psi_{U_{\lambda}})$ is the unique 
solution to \eqref{restatic1}-\eqref{restatic4} satisfying  
$U_{\lambda} \in W^{4}_{2,\mathcal{B}}(D) \cap S(\rho)$ 
and  $\Psi_{U_{\lambda}} \in W^2_2(\Omega(U_{\lambda}))$. 
The nonpositivity of $U_{\lambda}$ follows from \cite{SweersVassi2018} 
since $-g(U_{\lambda}) \leq 0$. 

We now prove Theorem \ref{existencestat}(ii). 
With the definition of the operator $A$, equations \eqref{evolutioneq} 
and \eqref{restatic4} read 
\begin{equation*}
\partial_t u + Au=-\lambda g(u) .
\end{equation*}
Setting $v=u-U_{\lambda}$, $\lambda \in (0,\lambda_s)$, and 
\begin{equation*}
B_{\lambda}:= \lambda Dg(U_{\lambda}) \in \mathcal{L}(W^{4}_{2,\mathcal{B}}(D), L_2(D)), 
\end{equation*}
we obtain the linearization 
\begin{equation}
\partial_t v + (A+ B_{\lambda}) v =  -\lambda \,\big( g(U_{\lambda}+v)
- g(U_{\lambda}) - Dg(U_{\lambda})v \big),
\label{linearization}
\end{equation}
and denoting the right-hand side of \eqref{linearization} by $G_{\lambda}(v)$, the initial value 
problem
\begin{empheq}{align*}
\partial_t v + (A+ B_{\lambda}) v &= G_{\lambda}(v), \quad t>0, 
 \\[0.1cm]
v(0)&=v^0,
\end{empheq}
where $G_{\lambda} \in C^{\infty}(\mathcal{O}_{\lambda}, L_2(D))$ is 
defined on an open zero neighborhood $\mathcal{O}_{\lambda} \subset W^{4}_{2,\mathcal{B}}(D)$ 
such that $U_{\lambda}+ \mathcal{O}_{\lambda} \subset S(\rho)$. 
Moreover, $G_{\lambda}(0)=0$ and $DG_{\lambda}(0)=0$. Since 
\begin{equation*}
\Vert B_{\lambda}\Vert_{\mathcal{L}(W^4_{2,\mathcal{B}}(D),L_2(D))} \rightarrow 
0 \quad \text{ as } \lambda \rightarrow 0, 
\end{equation*}
it follows from \cite[Proposition I.1.4.2]{Amann1995} that the operator $-(A+B_{\lambda})$ 
is  the generator of a strongly continuous analytic semigroup on $L_2(D)$ with 
a negative spectral bound provided that $\lambda$ is sufficiently small. 
Now we can apply \cite[Theorem 9.1.2]{Lunardi1995} and make $\lambda_s>0$ smaller, 
if necessary, to conclude part (ii) of Theorem \ref{existencestat}. 
\qed
\medskip

From Theorem \ref{existencestat}(ii) and the Lipschitz continuity of $[v\mapsto \phi_v]$ obtained in 
Proposition \ref{electrostaticprop}, we also conclude that 
\begin{equation*}
\Vert \phi_{u(t)} - \phi_{U_{\lambda}}\Vert_{W^2_2(\Omega)} \leq 
R' e^{-\varpi_0 t}\Vert u^0- U_{\lambda} \Vert_{W^{4}_{2}(D)}, \quad t \geq 0, 
\end{equation*}
with a positive constant $R'$.

\section{Nonexistence for Large Voltage Values: Proof of Theorem \ref{nonexistencestat}}
\label{nonexlarge}
We now turn to the proof of Theorem \ref{nonexistencestat}. Consider a solution $(u,\psi_u)$ 
to \eqref{restatic1}-\eqref{restatic4} with $u\in W^4_{2,\mathcal{B}}(D)$, 
$\psi_u \in W^2_2(\Omega(u))$, and $u(x)>-1$ for $x \in D$.  Set for $x \in D$
\begin{equation*}
G(x):= \big(1+\varepsilon^2 \vert \nabla u(x)\vert^2 \big)^2  \left( \partial_z \psi_u(x,u(x)) \right)^2.
\end{equation*}
Since 
\begin{equation*}
\nabla'\psi(x,u(x)) = - \nabla u(x) \partial_z \psi(x,u(x)), \quad x\in D, 
\end{equation*}
by \eqref{restatic2}, 
the function $u$ solves 
\begin{equation}
\label{soluG}
\beta \Delta^2 u - \tau \Delta u=- \lambda G   \quad \text{ in } D
\end{equation}
with hinged boundary conditions \eqref{restatic4}, and we infer from the nonnegativity of $G$ and 
\cite{SweersVassi2018} that 
\begin{equation}
\label{nonpositiveu}
-1< u(x) \leq 0,  \quad x \in  D. 
\end{equation}
To prove Theorem \ref{nonexistencestat} we use the idea from \cite[p.156]{LaurencotWalker2013}. 
The starting point is the following upper bound for 
$\psi_u$.

\begin{lemma}
	\label{nonexistencemax}
	For $(x,z) \in \Omega(u)$, define $M(x,z):=1+z-u(x)$. Then
	\begin{align}
	\label{nonexistencemax1}
	\psi_u(x,z) &\leq M(x,z), \quad (x,z)\in \Omega(u), 
	\\[0.05cm]
	\partial_z \psi_u(x,u(x)) &\geq 1, \quad x\in D.
	\label{nonexistencemax2}
	\end{align}
\end{lemma}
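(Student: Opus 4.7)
Set $w:=\psi_u-M$ on $\Omega(u)$. The main content is \eqref{nonexistencemax1}; then \eqref{nonexistencemax2} will drop out from boundary-derivative considerations at the top plate. First I inspect the boundary data for $w$. On the top $\mathfrak{G}_u$ one has $\psi_u(x,u(x))=1=M(x,u(x))$, so $w=0$. On the bottom $D\times\{-1\}$ one has $\psi_u=0$ and $M(x,-1)=-u(x)$, hence $w(x,-1)=u(x)\le 0$ by \eqref{nonpositiveu}. On the lateral face $\partial D\times(-1,0)$, the hinged condition $u|_{\partial D}=0$ forces $\psi_u(x,z)=1+z=M(x,z)$, so $w=0$. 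Thus $w\le 0$ on $\partial\Omega(u)$ with equality on the top and lateral faces. A direct computation using \eqref{restatic1} and $M=1+z-u(x)$ yields
$$
(\varepsilon^2\Delta'+\partial_z^2)w=-(\varepsilon^2\Delta'+\partial_z^2)M=\varepsilon^2\Delta u\quad\text{in }\Omega(u).
$$

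To conclude $w\le 0$ in $\Omega(u)$, I apply a comparison principle for $\mathcal{L}:=\varepsilon^2\Delta'+\partial_z^2$. The obstruction is that $\varepsilon^2\Delta u$ has no a priori sign, so the weak maximum principle does not apply off the shelf. The plan is to test the identity $\mathcal{L}w=\varepsilon^2\Delta u$ against $w_+:=\max(w,0)\in H^1_0(\Omega(u))$, which vanishes on $\partial\Omega(u)$ by the analysis above. Integration by parts in $\Omega(u)$ converts the left-hand side into the Dirichlet-type energy $-\int_{\Omega(u)}\bigl(\varepsilon^2|\nabla'w_+|^2+(\partial_zw_+)^2\bigr)\,d(x,z)$. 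For the right-hand side I integrate by parts in $x$ using the auxiliary function $h(x):=\int_{-1}^{u(x)}w_+(x,z)\,dz$: since $w_+(x,z)=0$ on the lateral face, $h$ vanishes on $\partial D$, and since $w_+(x,u(x))=0$ the differentiation of the $u$-dependent upper limit produces no extra term. One ends up with the energy identity
$$
\int_{\Omega(u)}\!\bigl(\varepsilon^2|\nabla'w_+|^2+(\partial_zw_+)^2\bigr)\,d(x,z)=\varepsilon^2\!\int_{\Omega(u)}\!\nabla u\cdot\nabla'w_+\,d(x,z),
$$
into which I substitute the pointwise relation $\nabla'w_+=\nabla'\psi_u+\nabla u$ valid on $\{w>0\}$; combined with Young's inequality and the ellipticity of $\mathcal{L}$, this structural equality is then meant to force $w_+\equiv 0$.

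Once \eqref{nonexistencemax1} is established, \eqref{nonexistencemax2} is immediate: $w\le 0$ in $\Omega(u)$ together with $w(x,u(x))=0$ means $w$ attains its maximum along $\mathfrak{G}_u$, so the one-sided incremental ratio in $z$ at $z=u(x)^-$ is nonnegative. Because $\psi_u\in W^2_2(\Omega(u))$ and $M$ is smooth, the trace of $\partial_zw$ on $\mathfrak{G}_u$ exists and satisfies $\partial_zw(x,u(x))\ge 0$ for a.e.\ $x\in D$, whence $\partial_z\psi_u(x,u(x))\ge\partial_zM(x,u(x))=1$. The technical heart of the argument is closing the $w_+$-energy step above; the sign-indefiniteness of $\Delta u$ is the principal obstacle, and the algebraic identity $\nabla'w_+=\nabla'\psi_u+\nabla u$ on $\{w>0\}$ is the key device that makes the structural cancellation possible.
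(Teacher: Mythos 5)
Your boundary analysis for $w=\psi_u-M$ and your derivation of \eqref{nonexistencemax2} from \eqref{nonexistencemax1} are both fine, but the central step — forcing $w_+\equiv 0$ — is not closed, and as written it cannot be. Your energy identity
\begin{equation*}
\int_{\Omega(u)}\bigl(\varepsilon^2|\nabla'w_+|^2+(\partial_zw_+)^2\bigr)\,d(x,z)
=\varepsilon^2\int_{\Omega(u)}\nabla u\cdot\nabla'w_+\,d(x,z)
\end{equation*}
is correct, but the right-hand side has no sign and cannot be absorbed: Young's inequality leaves the uncontrolled remainder $\tfrac{\varepsilon^2}{2}\int_{\{w>0\}}|\nabla u|^2$, and substituting $\nabla'w_+=\nabla'\psi_u+\nabla u$ produces $\varepsilon^2\int|\nabla u|^2+\varepsilon^2\int\nabla u\cdot\nabla'\psi_u$, which is again of indefinite size. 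The phrase ``is then meant to force $w_+\equiv 0$'' is where the proof stops being a proof; no cancellation has actually been exhibited, and I do not see one available along this route.

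The missing idea is that $\Delta u\ge 0$ in $D$, which restores the ordinary comparison principle: with this sign, $-\varepsilon^2\Delta'M-\partial_z^2M=\varepsilon^2\Delta u\ge 0$, so $M$ is a supersolution of the (elliptic, constant-coefficient) operator with $M\ge\psi_u$ on $\partial\Omega(u)$, and $\psi_u\le M$ in $\Omega(u)$ follows at once — no energy argument needed. The sign of $\Delta u$ is extracted from the fourth-order problem itself: write $v:=-\Delta u$, so that $v$ solves $-\beta\Delta v+\tau v=-\lambda G\le 0$ in $D$ with $v=-(1-\sigma)\kappa\,\partial_\nu u$ on $\partial D$ (this is exactly the hinged boundary condition). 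Since $D$ is convex, $\kappa\ge 0$, and since $-1<u\le 0$ with $u=0$ on $\partial D$, one has $\partial_\nu u\ge 0$ there, so $v\le 0$ on $\partial D$; the maximum principle for the second-order operator then gives $v\le 0$, i.e.\ $\Delta u\ge 0$ in $D$. You already invoked \eqref{nonpositiveu} for the bottom boundary, so all the ingredients were in your hands; what you did not do is feed the nonpositivity of $u$ and the convexity of $D$ back into the fourth-order equation to control the sign of $\Delta u$.
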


\begin{proof} 
The boundary conditions \eqref{restatic2} and \eqref{restatic4} for $\psi_u$ and $u$ ensure that, 
for $ x\in \partial D$ and $z\in (-1,0)$, 
\begin{equation*}
M(x,z)=1+z=\psi_u(x,z), 
\end{equation*}
while, for $x\in  D$, 
\begin{equation}
\label{nonexistencemax3}
M(x,u(x))=1=\psi_u(x,u(x))
\end{equation}
and 
\begin{equation*}
M(x,-1)=-u(x) \geq 0 =\psi_u(x,-1). 
\end{equation*}
Hence,  $M\geq \psi_u$ on $\partial \Omega(u)$. Moreover, for $(x,z)\in \Omega(u)$,  we have that   
\begin{equation*}
-\varepsilon^2 \Delta'M(x,z)-\partial_z^2M(x,z)= \varepsilon^2 \Delta u(x).
\end{equation*}
Now, in order to verify that $\Delta u\geq0$ in $D$,  we rewrite equations \eqref{soluG} and 
\eqref{restatic4} as the coupled system 
\begin{equation*}
\left.
\begin{array}{rlrl}
	-\beta\Delta v+\tau v \hspace{-0.2cm}&=-\lambda G  &\text{ in }D , 
	\\
	v\hspace{-0.2cm}&=-( 1-\sigma ) \kappa \, 
	\partial_{\nu } u  &\text{ on }\partial D %
\end{array}%
\right. 
\quad \text{ and } \quad 
\left.
\begin{array}{rlrl}
	-\Delta u \hspace{-0.2cm}&=v &\text{in }D , 
	\\
	u\hspace{-0.2cm}&=0 &\text{on }D .%
\end{array}%
\right.  \label{nonexistencemax4}
\end{equation*}
Since $D$ is convex, hence $\kappa \geq 0$, \eqref{nonpositiveu} implies that 
\begin{equation*}
-( 1-\sigma) \kappa \, 
\partial_{\nu } u \leq 0 \text{ on } \partial D,
\end{equation*}
and it follows from $-\lambda G \leq 0$ in $D$ and the maximum principle that $v\leq 0$ in $D$. 
Hence, $\Delta u \geq 0$ in $D$. Then, as 
\begin{equation*}
-\varepsilon^2 \Delta'M(x,z)-\partial_z^2M(x,z) \geq 0 
= -\varepsilon^2 \Delta'\psi_u(x,z)-\partial_z^2 \psi_u(x,z)
\end{equation*}
for $(x,z)\in \Omega(u)$, we can apply the maximum principle to obtain that 
$M \geq \psi_u$ in $\Omega(u)$. This, together with \eqref{nonexistencemax3}, 
yields that, for $x\in D$ and $z\in (-1,u(x))$, 
\begin{equation*}
\frac{\psi_u(x,z)-\psi_u(x,u(x))}{z-u(x)}\geq \frac{M(x,z)-M(x,u(x))}{z-u(x)}=1 .
\end{equation*}
Sending $z$ to $u(x)$, we conclude that $\partial_z \psi_u(x,u(x)) \geq 1$ for all $x\in D$.
\end{proof}

By Lemma \ref{nonexistencemax}, $G(x)\geq 1$ for $x \in D$, so that 
\begin{equation}
\label{inequalityu}
-\beta \Delta^2u +\tau \Delta u \geq \lambda \quad \text{ in } D.
\end{equation}
Next, according to Theorem \ref{eigenvaluetheorem} of the Appendix, 
the operator $\beta \Delta^2-\tau \Delta$ 
with the hinged boundary conditions \eqref{restatic4} has a positive eigenvalue $\mu_1>0$ with a 
corresponding positive eigenfunction $\varphi_1\in W^4_{2,\mathcal{B}}(D)$.  
Multiplying \eqref{inequalityu} by $\varphi_1$ and integrating over $D$ we have 
\begin{equation*}
\lambda \int_D \varphi_1 \, dx \leq \int_D (-\beta \Delta^2u +\tau \Delta u )\varphi_1\, dx.
\end{equation*}
Applying Green's formula and using the boundary condition $\varphi_1=0$ on $\partial D$ yields
\begin{equation*}
\int_D (-\beta \Delta^2u +\tau \Delta u)\varphi_1\, dx
= \beta \int_D \nabla \Delta u \cdot \nabla \varphi_1 \, dx 
- \tau \int_D \nabla u \cdot \nabla \varphi_1\, dx.
\end{equation*}
By using Green's formula twice and taking into account that $u=0$ on $\partial D$ we get
\begin{align*}
&\int_D (-\beta \Delta^2u +\tau \Delta u)\varphi_1\, dx
\\[-0.07cm]
&\quad = \beta \int_{\partial D}
( \Delta u \, \partial_{\nu} \varphi_1 - \Delta \varphi_1 \, \partial_{\nu}u)\,d\omega 
+ \beta \int_D \nabla u \cdot \nabla \Delta \varphi_1 \, dx + \tau \int_D u \, \Delta \varphi_1\, dx.
\end{align*}
Using once more Green's formula and $u=0$ on $\partial D$  we obtain
\begin{align*}
&\int_D (-\beta \Delta^2u +\tau \Delta u)\varphi_1\, dx
\\[-0.07cm]
&\quad = \beta \int_{\partial D}
( \Delta u \, \partial_{\nu} \varphi_1 - \Delta \varphi_1 \, \partial_{\nu}u)\, d\omega 
+ \int_D( -\beta \Delta^2 \varphi_1 + \tau \Delta \varphi_1) u\, dx
\\[-0.07cm]
&\quad =  \int_D ( -\beta \Delta^2 \varphi_1 + \tau \Delta \varphi_1) u\, dx,
\end{align*}
where the last step follows from the second boundary condition for $u$ and $\varphi_1$.  
Finally we end up with 
\begin{equation*}
\lambda \int_D \varphi_1 \, dx \leq 
\int_D ( -\beta \Delta^2 \varphi_1 + \tau \Delta \varphi_1 ) u\, dx 
=  -\mu_1 \int_D \varphi_1 \, u\,  dx < \mu_1 \int_D \varphi_1 \,  dx, 
\end{equation*}
since $u> -1$ in $D$. So $\lambda < \mu_1$, and this completes the proof of Theorem 
\ref{nonexistencestat}. 
\qed


\section{Appendix}

Here we show the existence of a positive eigenfunction for the linear operator 
$\beta \Delta^2-\tau \Delta$  in $W^4_{2,\mathcal{B}}(D)$. 

\begin{theorem}
\label{eigenvaluetheorem}
The eigenvalue problem 
\begin{empheq}{align*}
\beta \Delta^2 \varphi - \tau \Delta \varphi&=\mu \varphi  
&&  \hspace{-3cm} \text{ in } D,
 \\
\varphi= \Delta \varphi -(1-\sigma) \kappa \partial_{\nu} \varphi &= 0 
&& \hspace{-3cm} \text{ on } \partial  D
\end{empheq}
admits a unique eigenvalue $\mu_1$ which has a positive eigenfunction $\varphi_1$. 
The eigenvalue $\mu_1$ is positive and simple. Moreover, $\varphi_1 \in W^{4}_{2,\mathcal{B}}(D)$ 
and $\partial_{\nu} \varphi_1 <0$ on $\partial D$. 
\end{theorem}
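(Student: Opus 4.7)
The plan is to apply the Krein-Rutman theorem to the inverse of the operator $A = \beta \Delta^2 - \tau \Delta$ with hinged boundary conditions, defined in \eqref{operatorA}. By Proposition \ref{semigroup}, $A \in \mathcal{L}(W^4_{2,\mathcal{B}}(D), L_2(D))$ is a topological isomorphism, so $K := A^{-1}$ is well-defined and bounded. Since $D \subset \mathbb{R}^2$ has $C^4$ boundary, the embedding $W^4_2(D) \hookrightarrow C^1(\overline{D})$ is compact; hence $K$ is compact when viewed as an operator on the Banach space $E := \{w \in C^1(\overline{D}) : w = 0 \text{ on } \partial D\}$, equipped with the natural cone $E_+ := \{w \in E : w \geq 0 \text{ in } D\}$ whose interior is
\begin{equation*}
\mathrm{int}(E_+) = \{w \in E : w > 0 \text{ in } D, \; \partial_\nu w < 0 \text{ on } \partial D\}.
\end{equation*}

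The crucial step is to show that $K$ is \emph{strongly positive}, i.e., $K(E_+ \setminus \{0\}) \subset \mathrm{int}(E_+)$. Plain positivity $K(E_+) \subset E_+$ is exactly the sign-preserving property of \cite{SweersVassi2018} applied to the hinged operator $A$: for any $0 \leq f \in L_2(D)$, the function $u = Kf$ satisfies $u \geq 0$ in $D$. To sharpen this to strict positivity, I would adapt the decomposition from the proof of Lemma \ref{nonexistencemax}: setting $v := -\Delta u$, the problem $Au = f$ with $f \geq 0$, $f \not\equiv 0$ becomes
\begin{equation*}
-\beta \Delta v + \tau v = f \text{ in } D, \quad v = -(1-\sigma)\kappa\, \partial_\nu u \text{ on } \partial D,
\end{equation*}
\begin{equation*}
-\Delta u = v \text{ in } D, \quad u = 0 \text{ on } \partial D.
\end{equation*}
Since $u \geq 0$ with $u = 0$ on $\partial D$, one has $\partial_\nu u \leq 0$ on $\partial D$, whence $v \geq 0$ on $\partial D$ by the convexity assumption $\kappa \geq 0$. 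The strong maximum principle applied to the first equation then yields $v > 0$ in $D$, and a second application to $-\Delta u = v > 0$ with $u = 0$ on $\partial D$, combined with Hopf's boundary point lemma, gives $u > 0$ in $D$ and $\partial_\nu u < 0$ on $\partial D$; thus $Kf \in \mathrm{int}(E_+)$.

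With $K$ compact, strongly positive, and $\mathrm{int}(E_+) \neq \emptyset$, the strong form of the Krein-Rutman theorem produces a simple eigenvalue $r_1 > 0$ of $K$ with an eigenfunction $\varphi_1 \in \mathrm{int}(E_+)$, and $r_1$ is the unique eigenvalue of $K$ admitting a positive eigenfunction. Defining $\mu_1 := 1/r_1 > 0$ furnishes the desired positive, simple eigenvalue of $\beta \Delta^2 - \tau \Delta$ with hinged boundary conditions; elliptic regularity lifts $\varphi_1$ from $E$ to $W^4_{2,\mathcal{B}}(D)$, and its membership in $\mathrm{int}(E_+)$ simultaneously encodes $\varphi_1 > 0$ in $D$ and $\partial_\nu \varphi_1 < 0$ on $\partial D$. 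The main obstacle is the strong positivity step, because the $v$-boundary datum depends nonlocally on $u$ through $\partial_\nu u$: the two second-order maximum-principle arguments cannot be iterated blindly, and must be sequenced \emph{after} invoking the sign-preserving result of \cite{SweersVassi2018} to fix the sign of the boundary value of $v$.
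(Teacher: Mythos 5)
Your proposal is correct and follows the same skeleton as the paper's proof: both apply the strong form of the Krein--Rutman theorem to the compact solution operator of $\beta\Delta^{2}-\tau\Delta$ under hinged boundary conditions, with the positivity input coming from \cite{SweersVassi2018}. Two points differ. First, you realize the inverse as a compact endomorphism of $E=\{w\in C^{1}(\overline{D}):w=0 \text{ on }\partial D\}$ via the compact embedding $W^{4}_{2}(D)\hookrightarrow C^{1}(\overline{D})$, whereas the paper stays in $W^{4}_{2,\mathcal{B}}(D)$ itself and gains compactness by precomposing with the compact embedding into $L_{2}(D)$; since both spaces admit the same description of the interior of the positive cone, this is essentially cosmetic. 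Second, and more substantively, the paper quotes \cite{SweersVassi2018} for the full strong statement ($\mathcal{K}f>0$ in $D$ and $\partial_{\nu}(\mathcal{K}f)<0$ on $\partial D$ for $f\geq 0$, $f\not\equiv 0$), while you take from that reference only the weak sign-preserving property $u=Kf\geq 0$ and upgrade it yourself: $u\geq 0$ with $u=0$ on $\partial D$ forces $\partial_{\nu}u\leq 0$, hence (using $\kappa\geq 0$) a nonnegative boundary datum for $v=-\Delta u$, after which the strong maximum principle for $-\beta\Delta v+\tau v=f$ and then Hopf's boundary point lemma for $-\Delta u=v$ give $u>0$ in $D$ and $\partial_{\nu}u<0$ on $\partial D$. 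This upgrade is sound -- you correctly observe that the sign of the boundary value of $v$ must be fixed by the sign-preserving result before the two second-order arguments can be run, which is the same decomposition the paper itself uses in the proof of Lemma \ref{nonexistencemax} -- and it makes the argument more self-contained at the cost of a little extra work. The remaining steps (simplicity and uniqueness of the eigenvalue with positive eigenfunction, $\mu_{1}=1/r_{1}>0$, regularity of $\varphi_{1}$) are handled exactly as in the paper.
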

The proof of this theorem combines the recent sign-preserving 
result \cite{SweersVassi2018} with the celebrated Kre\mbox{\u\i}n-Rutman theorem.

\begin{proof}
Since  $\partial D \in C^4$, we can apply 
\cite[Theorem 2.20]{GazzolaGrunauSweers2010} (or alternatively, Proposition \ref{semigroup})
to obtain that, for $f\in L_2(D)$, the boundary value problem 
\begin{empheq}{align*}
\beta \Delta^2 \varphi - \tau \Delta \varphi&=f
&&  \hspace{-3cm} \text{ in } D,
\\
\varphi= \Delta \varphi -(1-\sigma) \kappa \partial_{\nu} \varphi &= 0 
&& \hspace{-3cm} \text{ on } \partial  D
\end{empheq}
has a unique solution $\varphi \in W^4_{2,\mathcal{B}}(D)$, which we denote by $\mathcal{S}f$. 
Furthermore, $\mathcal{S}$ belongs to $\mathcal{L}(L_2(D), W^4_{2,\mathcal{B}}(D))$. Setting 
$\mathcal{I}:W^{4}_{2,\mathcal{B}}(D)\rightarrow L_2(D)$ the compact embedding, we find that 
$\mathcal{K}:=\mathcal{S}\mathcal{I}$ is a compact endomorphism of $W^{4}_{2,\mathcal{B}}(D)$. 
We next observe, see \cite{Amann1976, DanersMedina1992}, that $W^{4}_{2,\mathcal{B}}(D)$ 
is an ordered Banach space with positive cone
\begin{equation*}
(W^{4}_{2,\mathcal{B}}(D))_+
:=\{u\in W^{4}_{2,\mathcal{B}}(D)\, : \, u\geq 0 \text{ in } D\}.
\end{equation*}
This cone has a nonempty interior given by 
\begin{equation*}
\text{int}\big( (W^{4}_{2,\mathcal{B}}(D))_+ \big) =
\{u \in W^{4}_{2,\mathcal{B}}(D) \, : \, u >0 \text{ in }D \,
\text{ and } \, \partial_{\nu} u <0 \text{ on } \partial D\}.
\end{equation*}
Now \cite{SweersVassi2018} guarantees that for nonnegative $f \in  W^{4}_{2,\mathcal{B}}(D)$ 
($f \not\equiv 0$), $\mathcal{K}f>0$ in $D$ and $\partial_{\nu} (\mathcal{K}f) <0$  on  $\partial D$. 
So $\mathcal{K}\big( (W^{4}_{2,\mathcal{B}}(D))_+ \setminus \{0\}\big) 
\subset \text{int}\big( (W^{4}_{2,\mathcal{B}}(D))_+ \big) $ and we can 
apply the Kre\mbox{\u\i}n-Rutman theorem, see e.g.  \cite[Theorem 3.2]{Amann1976} or \cite[Theorem 19.3]{Deimling1985}, to complete the proof. 
\end{proof}

\section*{Acknowledgements}
This paper is an edited extract of the author's Ph.D. thesis submitted to the Leibniz Universität Hannover. The author would like to thank Prof. Christoph Walker for his supervision. 
\bibliographystyle{siam}
\bibliography{modstat}

\begin{thebibliography}{10}

\bibitem{Amann1976}
{\sc H.~Amann}, {\em Fixed point equations and nonlinear eigenvalue problems in
  ordered {B}anach spaces}, SIAM Rev., 18 (1976), pp.~620--709.

\bibitem{Amann1993}
\leavevmode\vrule height 2pt depth -1.6pt width 23pt, {\em Nonhomogeneous
  linear and quasilinear elliptic and parabolic boundary value problems}, in
  Function {S}paces, {D}ifferential {O}perators and {N}onlinear {A}nalysis
  ({F}riedrichroda, 1992), vol.~133 of Teubner-Texte Math., Teubner, Stuttgart,
  1993, pp.~9--126.

\bibitem{Amann1995}
\leavevmode\vrule height 2pt depth -1.6pt width 23pt, {\em Linear and
  {Q}uasilinear {P}arabolic {P}roblems, {V}olume {I}: {A}bstract {L}inear
  {T}heory}, Birkh\"auser, Basel, Boston, Berlin, 1995.

\bibitem{AmannEscher2009}
{\sc H.~Amann and J.~Escher}, {\em Analysis. {III}}, Birkh\"{a}user Verlag,
  Basel, 2009.

\bibitem{DanersMedina1992}
{\sc D.~Daners and P.~Koch~Medina}, {\em Abstract evolution equations, periodic
  problems and applications}, vol.~279 of Pitman Research Notes in Mathematics
  Series, Longman, Harlow, 1992.

\bibitem{Deimling1985}
{\sc K.~Deimling}, {\em Nonlinear {F}unctional {A}nalysis}, Springer-Verlag,
  Berlin, 1985.

\bibitem{EscherLaurencotWalker2014}
{\sc J.~Escher, P.~Lauren\c{c}ot, and C.~Walker}, {\em A parabolic free
  boundary problem modeling electrostatic {MEMS}}, Arch. Ration. Mech. Anal.,
  211 (2014), pp.~389--417.

\bibitem{EscherLienstrombergsurvey}
{\sc J.~Escher and C.~Lienstromberg}, {\em A survey on second-order free
  boundary value problems modelling {MEMS} with general permittivity profile},
  Discrete Contin. Dyn. Syst. Ser. S, 10 (2017), pp.~745--771.

\bibitem{EspositoGhoussoubGuo2010}
{\sc P.~Esposito, N.~Ghoussoub, and Y.~Guo}, {\em Mathematical {A}nalysis of
  {P}artial {D}iifferential {E}quations {M}odeling {E}lectrostatic {MEMS}},
  vol.~20 of Courant Lect. Notes Math., Courant Institute of Mathematical
  Sciences, New York, 2010.

\bibitem{GazzolaGrunauSweers2010}
{\sc F.~Gazzola, H.-C. Grunau, and G.~Sweers}, {\em Polyharmonic {B}oundary
  {V}alue {P}roblems}, vol.~1991 of Lecture Notes in Mathematics,
  Springer-Verlag, Berlin, 2010.

\bibitem{HenrotPierre2018}
{\sc A.~Henrot and M.~Pierre}, {\em Shape variation and optimization}, vol.~28,
  EMS Tracts in Mathematics, 2018.

\bibitem{LaurencotWalker2013}
{\sc P.~Lauren\c{c}ot and C.~Walker}, {\em A stationary free boundary problem
  modeling electrostatic {MEMS}}, Arch. Ration. Mech. Anal., 207 (2013),
  pp.~139--158.

\bibitem{LaurencotWalkerI}
\leavevmode\vrule height 2pt depth -1.6pt width 23pt, {\em A free boundary
  problem modeling electrostatic {MEMS}: I. {L}inear bending effects}, Math.
  Ann., 360 (2014), pp.~307--349.

\bibitem{LaurencotWalkerII}
\leavevmode\vrule height 2pt depth -1.6pt width 23pt, {\em A free boundary
  problem modeling electrostatic {MEMS}: I{I}. {N}onlinear bending}, Math.
  Models Methods Appl. Sci., 24 (2014), pp.~2549--2568.

\bibitem{LaurencotWalkersign}
\leavevmode\vrule height 2pt depth -1.6pt width 23pt, {\em Sign-preserving
  property for some fourth-order elliptic operators in one dimension and radial
  symmetry}, J. Anal. Math., 127 (2015), pp.~69--89.

\bibitem{LaurencotWalker2016}
\leavevmode\vrule height 2pt depth -1.6pt width 23pt, {\em On a
  three-dimensional free boundary problem modeling electrostatic {MEMS}},
  Interfaces Free Bound., 18 (2016), pp.~393--411.

\bibitem{LaurencotWalkerESAIM}
\leavevmode\vrule height 2pt depth -1.6pt width 23pt, {\em A variational
  approach to a stationary free boundary problem modeling {MEMS}}, ESAIM
  Control Optim. Calc. Var., 22 (2016), pp.~417--438.

\bibitem{LaurencotWalker2017}
\leavevmode\vrule height 2pt depth -1.6pt width 23pt, {\em Some singular
  equations modeling {MEMS}}, Bull. Amer. Math. Soc., 54 (2017), pp.~437--479.

\bibitem{LaurencotWalkerSIAM2018}
\leavevmode\vrule height 2pt depth -1.6pt width 23pt, {\em Heterogeneous
  dielectric properties in {MEMS} models}, SIAM J. Appl. Math., 78 (2018),
  pp.~504--530.

\bibitem{Lunardi1995}
{\sc A.~Lunardi}, {\em Analytic {S}emigroups and {O}ptimal {R}egularity in
  {P}arabolic {P}roblems}, vol.~16 of Progress in Nonlinear Differential
  Equations and their Applications, Birkh\"{a}user Verlag, Basel, 1995.

\bibitem{NikP2020}
{\sc K.~Nik}, {\em A parabolic free boundary model for three-dimensional {MEMS}
  with a hinged top plate}.
\newblock In preparation, 2020.

\bibitem{PeleskoBernstein2002}
{\sc J.~A. Pelesko and D.~H. Bernstein}, {\em Modeling {MEMS} and {NEMS}},
  Chapman \& Hall/CRC Press, Boca Raton, 2002.

\bibitem{Sperb1981}
{\sc R.~P. Sperb}, {\em Maximum {P}rinciples and {T}heir {A}pplications},
  vol.~157 of Math. Sci. Engrg., Academic Press, New York, 1981.

\bibitem{SweersVassi2018}
{\sc G.~Sweers and K.~Vassi}, {\em Positivity for a hinged convex plate with
  stress}, SIAM J. Math. Anal., 50 (2018), pp.~1163--1174.

\end{thebibliography}

\end{document}